\newcommand{\ep}{\varepsilon}
\def\ds{\displaystyle}
\def\p{\partial}
\def\n{\nabla}
\def\R{\mathbb{R}}
\def\h{\mathbb{H}}
\def\He1{\boldsymbol{H}^1_{\Gamma_b}(\Omega_{\ep})}
\def\H1{\boldsymbol{H}^1(\Omega_{\ep})}
\newtheorem{thm}{ \bf Theorem}
 \newtheorem{lem}[thm]{ \bf Lemma}
 \newtheorem{prop}[thm]{ \bf Proposition}
 \newtheorem{defn}[thm]{ \bf Definition}
 \newtheorem{rem}[thm]{ \bf Remark}
\date{}
\title{Unfolding operator on Heisenberg Group and  applications in Homogenization}
\begin{document}

\author{A. K. Nandakumaran}
\address{Department of Mathematics, Indian Institute of Science, Bangalore, India 560012} 
\email{nands@iisc.ac.in}

\author{A. Sufian}
\address{Department of Mathematics, Indian Institute of Science, Bangalore, India 560012}
\email{abusufian@iisc.ac.in}

\begin{abstract}
The periodic unfolding method is one of the latest tool after multi-scale convergence to study multi-scale problems like homogenization problems. It provides a good understanding of various micro scales involved in the problem which can be conveniently and easily applied to get the asymptotic limit.
In this article, we develop {\it the periodic unfolding}  for the Heisenberg group which has a non-commutative group structure. In order to do this, the concept of greatest integer part, fractional part for the  Heisenberg group have been introduced corresponding to the periodic cell.  Analogous to the Euclidean unfolding operator, we  prove the  integral equality, $L^2$-weak compactness, unfolding gradient convergence and other related properties. Moreover, we have the adjoint operator for the unfolding operator which can be recognized as an average operator. As an application of the unfolding operator, we have homogenized the standard elliptic PDE with oscillating coefficients. We have also considered an optimal control problem and characterized the interior periodic optimal control in terms of unfolding operator.      
\end{abstract}

\subjclass[2010]{35R03; 35J70; 35B27}

\keywords{Homogenization, periodic unfolding, two-scale convergence}

\maketitle

\section{Introduction}

The mathematical theory of homogenization was introduced in the 1970s in order to describe the behavior of composite materials.  Since then, several homogenization methods have been developed. Among many methods developed in the last 50 years two-scale convergence and unfolding methods are very effective techniques. The two-scale convergence was introduced by Nguetseng \cite{GN89} and later developed by Allaire in \cite{GA92} which has been extensively applied by various authors over the last few decades. There are plenty of methods available for the Euclidean setting or, more precisely, in the commutative group structure. The research  on Homogenization in non-commutative group structure is very limited.  Among the early results on the homogenization in non-commutative group structure, we cite the results by Biroli, Mosco, and Tchou \cite{MUNA97}. In this paper, the authors construct explicitly a periodic tilling associated with the Laplace operator $\Delta_\h$ associated with the Heisenberg group. They have analyzed the asymptotic behavior of its eigenfunctions in a domain with isolated Heisenberg periodic holes with Dirichlet boundary conditions on their boundaries. To establish the convergence to the homogenized problem, they employ Tartar’s energy method.  Another piece of work on homogenization in the Heisenberg group is due to Biroli, Tchou and Zhikov in \cite{MNAZ99}. The  problem has  revisited in  \cite{FM02} with less regular holes. Due to less regularity on the hole, they could not employ the method as in \cite{FM02}, and they used the method introduced in \cite{zhik96} by Zhikov.  For further reading, we refer to the articles \cite{FM06,FCT18} and references therein.       

\par Now, coming back to the Euclidean setting, the method of two-scale convergence in $\R^n$ is deeply related to the group structure of $\R^n$ and the definition of the periodic function in terms of group translation.   
The concept of the tiling and the periodic function defined in \cite{MUNA97} for the Heisenberg group, motivates B. Franchi and M. C. Tesi in \cite{FM02}
 to define the concept of two-scale convergence in the Heisenberg Group. As an application of this two-scale convergence, they have investigated a Dirichlet problem for a generalized Kohn Laplacian operator with strongly oscillating Heisenberg-periodic coefficients in a domain that is perforated by interconnected Heisenberg-periodic pipes. They have proved all the similar results as in Euclidean two-scale scale convergence.
\\
One of the latest methods for homogenization is {\it the periodic unfolding method} introduced by  Cioranescu, Damlamian, and Griso in \cite{priun}, where the micro scale is introduced at the micro level of the problem before taking the limit, whereas in two-scale convergence, the micro scale is recovered at the limit.  The unfolding operator is also quite  easy to apply in multi-scale analysis and help to see more deeply the microscopic scale. For the sake of the reader, we recall the two-scale convergence and unfolding operators in the  Euclidean space set-up. Let us recall the definition two-scale convergence and unfolding operator for the  Euclidean domain and will see how they are related to each other.
\par Let $Y$ be the reference cell and $\Omega$ be a bounded domain of $\R^n.$ The smooth $Y$ periodic (Euclidean sense) function space is denoted by $C_{\#}^{\infty}(Y).$
\begin{defn}[Two-scale convergence]
A family of function $\{u_{\varepsilon}\}\in L^2(\Omega)$ is said to be two-scale converges to $u_0\in L^2(\Omega\times Y)$, if for any $\psi\in C_c^{\infty}(\Omega;C_{\#}^{\infty}(Y))$, we have 
\begin{align}\label{two-scale convergenc}
\lim_{\varepsilon\to 0}\int_{\Omega}u_\varepsilon(x)\psi\left(x,\frac{x}{\varepsilon}\right)\, dx
=\frac{1}{|Y|}\int_{\Omega\times Y}u_0(x,y)\psi(x,y)\, dxdy
\end{align}  
\end{defn}  

Let  $$E_{\varepsilon}=\{k\in \mathbb{Z}^n:~\varepsilon k+\varepsilon Y \subset \Omega\},\,\Omega_\varepsilon=\bigcup_{k\in E_\varepsilon}\{k\varepsilon+\varepsilon Y\},~\Lambda_\varepsilon =\Omega\backslash \Omega_\varepsilon ~\text{(see Figure \ref{unfol}).}$$ 
The greatest integer part and fractional part with respect to $Y$ are denote by $\left[\frac{x}{\varepsilon}\right]_Y$ and $\left\{\frac{x}{\varepsilon}\right\}_Y$ respectively. Note that the micro scale $y$ is given in the limit $u_0=u_0(x,y)$, $y\in Y$. We now introduce this scale at $\varepsilon$ level itself using the scale decomposition of the Euclidean space $\mathbb{R}^n$. We will later give appropriate scale decomposition of the Heisenberg group. For $x\in \mathbb{R}^n$, we can write the $\varepsilon-$scale  decomposition as 
\[x= \varepsilon\left(\left[\frac{x}{\varepsilon}\right]_Y+ \left\{\frac{x}{\varepsilon}\right\}_Y\right),\]
where $\left[\frac{x}{\varepsilon}\right]_Y$ and $\left\{\frac{x}{\varepsilon}\right\}_Y$ 
are the integer and fractional parts, respectively  with respect to the reference cell $Y$ (see Figure 2). We introduce the scale $y$ for varying $ \left\{\frac{x}{\varepsilon}\right\}_Y$  and we have the following definition.
\begin{defn}[Unfolding operator]
For $\phi$ Lebesgue-measurable real valued function on $\Omega$, the unfolding operator $T^\varepsilon$ is defined as follows:
\begin{align}
T^{\varepsilon}(\phi)(x,y)=\begin{cases}
\phi\left(\varepsilon\left[\frac{x}{\varepsilon}\right]_Y+\varepsilon  y\right) & \,\, \emph{for}~(x,y)\in \Omega_\varepsilon \times Y\\
0 \hspace{2cm}& \emph{for}~(x,y)\in \Lambda_\varepsilon\times Y.
\end{cases}
\end{align}
\end{defn}
\begin{figure}\label{unfol}
\includegraphics[scale=.5]{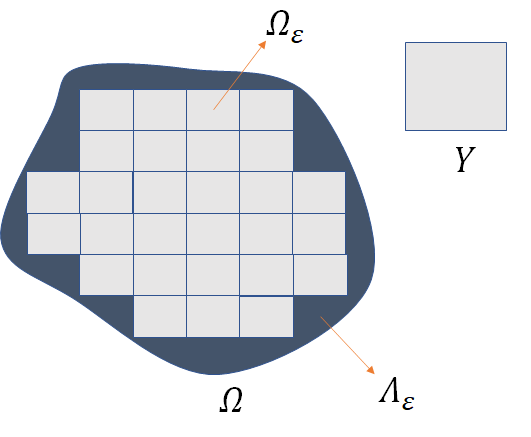}
\caption{Tiling of $\Omega$ in the Euclidean set up}
\end{figure}
The method of periodic unfolding is based on the concept of two-scale convergence.
The test functions used in two scale have  one macro scale $x$ which tells position in $\Omega$ another is micro-scale  $\frac{x}{\varepsilon}$ which tells position of $x$ in the reference cell. In unfolding this concept concept is used very explicitly. More-precisely, if we view our domain as $\Omega=\bigcup_{k\in E_\varepsilon}\{k\varepsilon+\varepsilon Y\}$ and $\psi \in C_c^{\infty}(\Omega;C_{\#}^\infty(Y)$.
Then, we can write \eqref{two-scale convergenc} as
\begin{align}&\lim_{\varepsilon\to 0}\int_{\Omega}u_\varepsilon(x)\psi^\varepsilon\left(x\right)\, dx\nonumber\\
&=\lim_{\varepsilon\to 0} \sum_{k}\int_{k\varepsilon+kY}u_\varepsilon(x)\psi\left(x,\frac{x}{\varepsilon}\right)\, dx \nonumber\\
&=\lim_{\varepsilon\to 0} \sum_{k}\int_{k\varepsilon+kY}u_\varepsilon(k \varepsilon+\varepsilon y)\psi\left(k \varepsilon+\varepsilon y,y\right)dy \nonumber\\
&=\frac{1}{|Y|}\lim_{\varepsilon\to 0} \int_{\Omega}\int_{Y}u_\varepsilon\left(\varepsilon\left[\frac{x}{\varepsilon}\right] +\varepsilon y\right)\psi\left( \varepsilon\left[\frac{x}{\varepsilon}\right]_Y+\varepsilon y,y\right)dy\nonumber\\
&=\frac{1}{|Y|}\lim_{\varepsilon\to 0}\int_{\Omega\times Y}T^{\varepsilon}(u_\varepsilon)(x,y)T^{\varepsilon}(\psi^\varepsilon)(x,y)\, dxdy\nonumber\\
&=\frac{1}{|Y|}\int_{\Omega\times Y}u_0(x,y)\psi(x,y)\, dxdy.\nonumber
\end{align}   
Observe that definition of two-scale convergence reduced to weakly convergence in\\ $L^2(\Omega\times Y)$ and it is easy to apply as it is  technicality less demanding. 
 \par There are some advantages of using this method; for example, while doing optimal control problems in periodic setup, the optimal control is easily characterized by the unfolding of the adjoint state, which helps to analyze asymptotic behavior see \cite{AAN19,semiopt,ARB,JDE,AN21}. This method reduces the definition of two-scale convergence in $L^p(\Omega)$ to weak convergence of the unfolding  sequence in $L^p(\Omega\times Y)$ for $1<p<\infty$. This is a very effective method in analyzing the various multi-scale problems; for details, see \cite{ smooth, CioDAmGri1} and references therein.

\par The unfolding method in $\R^n$ is intensely dependent  on the group structure of $\R^n$. we aim to develop a similar type of unfolding operator for the  Heisenberg group. As we have already mentioned that the concept of periodic functions and  tiling in Heisenberg group was introduced in \cite{MUNA97}, which motivates us to define the greatest integer part for $x\in \h^1$, that is $[x]_\h$ and fractional $\{x\}_\h.$ Using these definitions, we have defined unfolding operator $T^{\varepsilon}$ in the Heisenberg group. The definition of $T^{\varepsilon}$ for $\h^1$, keeps periodic function unchanged. As an application of this unfolding operator, we have considered a PDE $-\text{div}_\h(A^{\varepsilon}\nabla_\h )$ with Heisenberg-periodic oscillating coefficients in an open bounded domain $\Omega \subset \h^1.$ This model PDE is also considered in \cite{FM02}, in a perforated domain where they have used two-scale convergence to analyze the asymptotic behavior. Our aim is to introduce unfolding in Heisenberg group. Though we have applied it only to a standard problem, we hope to apply to more general problems and probably introduce unfolding to other non-commutative groups.
\par The rest of this article is organized as follows. In Section \ref{prelim}, we recall the definition and properties of periodic and non-periodic function spaces in Heisenberg group. In Section \ref{defofunfol}, definition of $[x]_\h$, $\{x\}_\h$, unfolding operators and adjoint operators are introduced. Properties and their proof are also given here. Finally, in Section \ref{Homgenization}, we  consider a  model PDE with oscillating coefficients, homogenize it and also shown the characterization of  the interior periodic optimal control for the interior periodic optimal control problem. We did not present the final homogenization of the optimal control problem as it follows along similar lines.

\section{preliminaries} \label{prelim}
Here, we introduce required notations which will be used through out the article and some preliminaries. We denote the $1$-dimensional Heisenberg group by $\h^1\cong \R^3$ and a typical point in $\h^1$ is denoted by $x=(x_1,x_2,x_3)$. For $p=(p_1,p_2,p_3), q=(q_1,q_2,q_3)\in \h^1$, the group operation is $$p\cdot q=(p_1+q_1,p_2+q_2,p_3+q_3+2(p_2q_1-p_1q_2).$$ The inverse of  $x\in \h^1$ is $x^{-1}=(-x_1,-x_2,-x_3).$ The family of non-isotropic dilations are denoted by $\delta_{\lambda}$ defined as
$$\delta_\lambda (x)=(\lambda x_1,\lambda x_2,\lambda^2 x_3)~~\text{for}~x\in \h^1 .$$
The left translation operator corresponding to $p\in \h^1$ denoted by $\tau_p$ defined as $$\tau_p(x)=p.x~~~\text{for}~~x\in \h^1.$$ We consider the following homogeneous norm with respect to $\delta$; for $x\in \h^1$ $$\|x\|_{\infty}:=max\left \{\sqrt{x_1^2+x_2^2},\,\,\sqrt{|x_3|}\right \}.$$ The associated distance between any $p,q\in \h^1$ given as $$d(p,q)=\|p^{-1}\cdot q \|_{\infty}.$$ There is a relation between this distance and Euclidean distance (see \cite{FM02}), which is stated in the following proposition.
\begin{prop}\label{equ-heidis}
 The function $d$ is a distance in $\h^1$. Further, it is homogeneous and left translation invariant, that is for any $p,q,x \in \h^1$ and $\lambda>0,$ $$ d(\delta_\lambda q,\delta_\lambda x)=\lambda d(q,x) \quad \text{and}\quad d(\tau_p q,\tau_p x)=d(q,x).$$
For any bounded subset $\Omega$ of $\h^1,$ there exist positive constant $c_1(\Omega),c_2(\Omega)$ such that $$c_1(\Omega)|p-q|_{\R^3}\leq d(p,q)\leq c_2(\Omega)\sqrt{|p-q|_{\R^3}}.$$ Here $|~\cdot~|_{\R^{3}}$ denotes the Euclidean norm.  
\end{prop}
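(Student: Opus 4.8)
The plan is to first record the algebraic identities for the group operation, inverse, and dilation that reduce the homogeneity and left-invariance statements to one-line computations, then verify the metric axioms, and finally establish the two-sided comparison with the Euclidean distance on bounded sets. Throughout I will write $z=p^{-1}\cdot q=(z_1,z_2,z_3)$; a direct substitution of $p^{-1}=(-p_1,-p_2,-p_3)$ into the group law gives $z_1=q_1-p_1$, $z_2=q_2-p_2$ and $z_3=q_3-p_3-2(p_2 z_1-p_1 z_2)$, so that $d(p,q)=\|z\|_{\infty}$. The preliminary identities I would check are: $\delta_\lambda$ is a group homomorphism, $(\delta_\lambda q)^{-1}=\delta_\lambda(q^{-1})$, $(p\cdot q)^{-1}=q^{-1}\cdot p^{-1}$, and, for the norm itself, $\|x^{-1}\|_{\infty}=\|x\|_{\infty}$ together with $\|\delta_\lambda x\|_{\infty}=\lambda\|x\|_{\infty}$ for $\lambda>0$; the last holds because the first two coordinates scale by $\lambda$ while the third scales by $\lambda^2$, which exactly matches the $\sqrt{|x_3|}$ appearing in the norm.

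Granting these, homogeneity and left-invariance are immediate. Since $(\delta_\lambda q)^{-1}\cdot(\delta_\lambda x)=\delta_\lambda(q^{-1}\cdot x)$, we get $d(\delta_\lambda q,\delta_\lambda x)=\|\delta_\lambda(q^{-1}\cdot x)\|_{\infty}=\lambda\,d(q,x)$; and since $(\tau_p q)^{-1}\cdot(\tau_p x)=q^{-1}\cdot p^{-1}\cdot p\cdot x=q^{-1}\cdot x$, we get $d(\tau_p q,\tau_p x)=d(q,x)$.

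For the metric axioms, positivity and definiteness follow from $\|x\|_{\infty}\ge 0$ with equality only at the identity, combined with $p^{-1}\cdot q=(0,0,0)\iff q=p$; symmetry follows from $\|x^{-1}\|_{\infty}=\|x\|_{\infty}$ applied to $x=p^{-1}\cdot q$. The only nontrivial axiom, and the step I expect to be the main obstacle, is the triangle inequality. Using left-invariance I reduce it to the subadditivity $\|x\cdot y\|_{\infty}\le\|x\|_{\infty}+\|y\|_{\infty}$, taking $x=p^{-1}\cdot q$ and $y=q^{-1}\cdot r$ so that $x\cdot y=p^{-1}\cdot r$. The horizontal part of $x\cdot y$ is controlled by the Euclidean triangle inequality in $\R^2$. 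The delicate term is the vertical coordinate, which carries the symplectic correction; here I would estimate
\[
\left| x_3+y_3+2(x_2 y_1-x_1 y_2)\right|\le |x_3|+|y_3|+2\sqrt{x_1^2+x_2^2}\,\sqrt{y_1^2+y_2^2}\le \|x\|_{\infty}^2+\|y\|_{\infty}^2+2\|x\|_{\infty}\|y\|_{\infty},
\]
where the middle inequality is Cauchy--Schwarz applied to the term $x_2 y_1-x_1 y_2$. The right-hand side is precisely $(\|x\|_{\infty}+\|y\|_{\infty})^2$, so taking square roots bounds the $\sqrt{|\cdot|}$ contribution by $\|x\|_{\infty}+\|y\|_{\infty}$ as well, and since both coordinate contributions are dominated, so is their maximum.

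Finally, for the comparison on a bounded $\Omega$, left-invariance again lets me compare $\|z\|_{\infty}$ with $|p-q|_{\R^3}=\sqrt{z_1^2+z_2^2+(q_3-p_3)^2}$, using $q_3-p_3=z_3+2(p_2 z_1-p_1 z_2)$. With $|p_1|,|p_2|\le M(\Omega)$ I can bound $|q_3-p_3|$ by $|z_3|+C(\Omega)(|z_1|+|z_2|)$ and, conversely, $|z_3|$ by $|q_3-p_3|+C(\Omega)(|z_1|+|z_2|)$; because $\|z\|_{\infty}$ and $|p-q|_{\R^3}$ remain bounded on $\Omega$, combining $\sqrt{z_1^2+z_2^2}\le\|z\|_{\infty}$ and $\sqrt{|z_3|}\le\|z\|_{\infty}$ with these estimates yields the lower bound $c_1(\Omega)|p-q|_{\R^3}\le d(p,q)$, while $\sqrt{z_1^2+z_2^2}\le |p-q|_{\R^3}$ and $|z_3|\le (1+C(\Omega))|p-q|_{\R^3}$ give the upper bound $d(p,q)\le c_2(\Omega)\sqrt{|p-q|_{\R^3}}$. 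The essential feature here is the anisotropy of the group: the vertical variable scales quadratically, which is exactly why only a square-root bound, with a constant depending on $\Omega$, can hold in general.
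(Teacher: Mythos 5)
Your proof is correct, but note that the paper itself contains no argument for this proposition: it is stated as a known result imported from Franchi and Tesi \cite{FM02} (``There is a relation between this distance and Euclidean distance (see \cite{FM02})''), so there is no in-paper proof to match. What you supply is the standard self-contained verification, and it is complete: reducing homogeneity and left-invariance to the identities $\delta_\lambda(p\cdot q)=\delta_\lambda p\cdot\delta_\lambda q$ and $(\tau_p q)^{-1}\cdot(\tau_p x)=q^{-1}\cdot p^{-1}\cdot p\cdot x=q^{-1}\cdot x$ is exactly right, and the only genuinely Heisenberg-specific step, the triangle inequality, is correctly handled by reducing via left-invariance to subadditivity of the gauge and estimating the vertical coordinate by Cauchy--Schwarz, $|x_2y_1-x_1y_2|\le\sqrt{x_1^2+x_2^2}\,\sqrt{y_1^2+y_2^2}$, so that $|x_3+y_3+2(x_2y_1-x_1y_2)|\le(\|x\|_\infty+\|y\|_\infty)^2$; this is why this particular max-gauge is an exact norm rather than a quasi-norm. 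Two constants in the local comparison deserve to be written out explicitly: in the upper bound the horizontal estimate $\sqrt{z_1^2+z_2^2}\le|p-q|_{\R^3}$ becomes a square-root bound only after inserting $|p-q|_{\R^3}\le\operatorname{diam}(\Omega)$, i.e. $|p-q|_{\R^3}\le\sqrt{\operatorname{diam}(\Omega)}\,\sqrt{|p-q|_{\R^3}}$, and in the lower bound the conversion $|z_3|\le\|z\|_\infty^2\le K(\Omega)\|z\|_\infty$ is where boundedness of $d$ on $\Omega\times\Omega$ enters; your clause that $\|z\|_\infty$ and $|p-q|_{\R^3}$ remain bounded covers both, but the reader should see $c_2(\Omega)$ absorbing $\sqrt{\operatorname{diam}(\Omega)}$ and $c_1(\Omega)^{-1}$ absorbing $K(\Omega)+4M(\Omega)$. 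Your closing remark is also the right sanity check: since $d((0,0,0),(0,0,t))=\sqrt{|t|}$, the exponent $1/2$ in the upper bound is sharp and no two-sided Lipschitz comparison can hold.
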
  

In particular the induced topology by $d$ and the Euclidean topology are coincide on $\h^1$. The usual Lebesgue measure is the left and right invariant Haar measure for the group. For any measurable set $S\subset \h^1,$ the Lebesgue measure of $S$ is denoted by $|S|.$  Because of the anisotropic dilation $\delta_\lambda$ for $\lambda>0,$ we have $|\delta_\lambda(S)|=\lambda^4 |S|.$ That is why the vector space dimension  of $\h^1$ is $3$, but the Hausdorff dimension is $4$.        
\par The Lie algebra of the left invariant vector field of $\h^1$ is given by
$$ X_1=\frac{\p }{\p x_1}+2x_2\frac{\p }{\p x_3},~~~
~X_2=\frac{\p}{\p x_2}-2x_1\frac{\p}{\p x_3},~~\text{and}~X_3=\frac{\p}{\p x_3}.$$
The only non-trivial commutator relation is,$$[X_1,X_2]=X_1X_2-X_2X_1=-4X_3.$$ 
The horizontal vector field H$\h^1$ is the span of the vector field $\{X_1,X_2\}.$ Hence, we will identify a section $\phi$ of H$\h^1$ with the function $\phi=(\phi_1,\phi_2):\h^1\to \R^2.$

\subsection*{Function Spaces}(see \cite{FM02}): 
Through out this article, $\Omega \subset \h^1$ is a bounded domain. For any integer $k>0$, $C^{k}(\Omega)$, $C^{\infty}(\Omega)$ denote the usual differentiable function spaces in the Euclidean sense. We denote by $C^k(\Omega;\text{H}\h^1),$ for $k\geq 0$, the set of all $C^k$ section of H$\h^1$. Now, we will define gradient and divergence as follows.
\begin{defn}
Let $f\in C^{1}(\Omega)$ and $\phi=(\phi_1,\phi_2)\in C^{1}(\Omega;\emph{H}\h^1)$ is a continuously differentiable section of \emph{H}$\h^1$, we define:
$$\n_{\h}f:=(X_1f,X_2f)~\,\,\emph{and}~\emph{div}_{\h}\phi=X_1 \phi_1+X_2\phi_2.$$
\end{defn}
\noindent Note that both $\n_{\h}$, div$_{\h}$ are left invariant differential operators. Also, $\n_\h f$ can be defined as a section of H$\h^1$ as
$$\n_\h f=(X_1f) X_1+(X_2f)X_2.$$
The Heisenberg gradient $\n_{\h}$ can be written in terms of Euclidean gradient $\n$ as $$\n_\h=C(x)\n, \quad \text{ where} \quad  C=C(x)=\begin{bmatrix}
1 & 0 & 2x_2\\
0 & 1 & -2x_1\end{bmatrix}.$$ Similarly, $\text{div}_\h\phi=\text{div} (C^t\phi),$ where $\text{div}$ is the Euclidean divergence in $\R^3$ and $C^t$ is the transpose of the matrix $C$. \\

\par \noindent For $1\leq p<\infty$, $L^{p}(\Omega)$ denotes the usual Euclidean $p$-integrable  space. Here, we will introduce all the necessary non-periodic function spaces
\begin{enumerate}[label=(\roman*)]
\item The set of all smooth sections of H$\h^1$ is denoted by $C^{\infty}(\Omega;\text{H}\h^1).$ Similarly the compactly supported smooth sections of H$\h^1$ is denoted $C^{\infty}_{c}(\Omega;\text{H}\h^1).$ 
\item Analogous to standard  Euclidean $H^1(\Omega)$ Sobolev space, we have the following Heisenberg Sobolev spaces$$H_{\h}^1(\Omega)=\{f\in L^{2}(\Omega):~X_1 f,X_2f \in L^{2}(\Omega)\}.$$ 
Further, $C^{\infty}(\Omega)\cap H^{1}_{\h}(\Omega)$ is dense in $H^{1}_{\h}(\Omega).$
\end{enumerate}
Through out this article, we will denote the cube $[0,2]^3$ by $Y$. We have this cube of side length $2$ instead of $[0,1]^3$ to avoid the intersection of tiles in the Heisenberg periodic setting. A set $G\subset \h^1$ is said to be $Y$-periodic if for any $x\in \h^1$ and $k\in \mathbb{Z}^3$, the translations $\tau_{2k}(x)\in G.$ The space $                                                                                                      \h^1$ is indeed $Y$-periodic. In this article, we will use $\h^1$ as $Y$-periodic set just like $\R^n=\biguplus_{k\in \mathbb{Z}^n} ([0,1)^n+k)$
\begin{enumerate}[label=(\roman*)]
\item \textit{Periodic function:} Let $f$ be a real valued function defined on $\h^1$. The function $f$ will be called $Y$-periodic if for any any $k\in \mathbb{Z}^3$,
$$f(\tau_{2k}(x))=f(x)~~\text{for all}~x\in \h^1.$$
A section $\phi$ in H$\h^1$ is called $Y$-periodic if the canonical co-ordinates are $Y$-periodic. 
\item We denote $C^{\infty}_{\#,\h}(Y)$, the space of smooth real valued $Y$-periodic functions.
\item For $1\leq p<\infty$, we denote by $L^{p}_{\#,\h}(Y)$, the space of $Y$- periodic functions such that $f|_{Y}\in L^{p}(Y)$ endowed with the norm $\|f\|_{L^p(Y)}.$
\item Similarly, $H^{1}_{\#,\h}(Y)$ denotes the space of all $f\in L^2_{\#,\h}$ such that $X_i f \in L^2(\delta_{\lambda}(Y))$ for all $\lambda>0$ endowed with the norm $\|f\|_{H^1_{\h}(Y)}$. This is a Hilbert space.
\end{enumerate}
We now introduce the periodic vector valued function spaces:
\begin{enumerate}[label=(\roman*)]
\item We denote $C^{\infty}_{c}(\Omega;C^{\infty}_{\#,\h}(Y))$, the space of all smooth functions on $\Omega\times \h^1$ such that for any $\phi \in C^{\infty}_{c}(\Omega;C^{\infty}_{\#,\h}(Y)),$ $x\to \phi(x,\cdot)$ is $C^{\infty}$ from $\Omega \to C^{\infty}_{\#,\h}(Y)$ with compact support.
\item The space of periodic smooth sections is denoted by $C^{\infty}_{\#,\h}(Y;\text{H}\h^1).$ 
\item The space $H^{1}_{\#}(Y;\text{H}\h^1)$ is defined as the set $\{\phi=(\phi_1,\phi_2)\}$ of all measurable sections of H$\h^1$ such that $\phi \in (H^{1}_{\#,\h}(Y))^2.$  
\item The space $V^{\text{div}}_{\#,\h}(Y)$ is the completion of $\{u\in C^{\infty}_{\#,\h}(Y;\text{H}\h^1) \}$ with respect to the following norm
$$\|u\|_{V^{\text{div}}_{\#,\h}(Y)}=\|u\|_{L^2_{\#,\h}(Y;\text{H}\h^1)}+\|\text{div}_{\h}u\|_{L^2_{\#,\h}(Y)}$$
\end{enumerate}
Now, we will state a version of Theorem 2.16 from \cite{FM02}, which will be used in our analysis.

\begin{thm}\label{div-perp}
Let $F\in L^2(\Omega;V_{\#,\h}^{\emph{div}}(Y)^*)$ such that $$\int_{\Omega}\langle F(x),\phi(x,\cdot)\rangle_{V_{\#,\h}^{\emph{div}}(Y)^*,V_{\#,\h}^{\emph{div}}(Y)} dx=0$$
for all $\phi(x,y)\in L^2(\Omega;V_{\#,\h}^{\emph{div}}(Y))$ with $\emph{div}_{\h,y}\phi=0$ for a.e $x\in \Omega.$ Then, $F=\n_{\h,y}\psi$, with $\psi \in L^2(\Omega;L_{\#,\h}^2(Y)/\R).$
\end{thm}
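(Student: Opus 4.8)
The plan is to read the statement as the periodic, sub-Riemannian analogue of the classical fact that a functional annihilating every divergence-free field is a gradient, and to reduce it to a closed-range property of the divergence operator on the cell $Y$. First I would eliminate the $x$-dependence. Testing the hypothesis with $\phi(x,y)=\theta(x)v(y)$, where $\theta$ ranges over $L^2(\Omega)$ and $v$ over a countable dense subset of the divergence-free subspace $V_0:=\{v\in V_{\#,\h}^{\text{div}}(Y):\text{div}_{\h}v=0\}$, yields $\langle F(x),v\rangle=0$ for a.e.\ $x\in\Omega$ and every such $v$; by density $F(x)\in V_0^{\perp}$ for a.e.\ $x$ (here I use that $V_{\#,\h}^{\text{div}}(Y)$ is separable, being the completion of a space of smooth sections). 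This reduces the problem to the parameter-free claim: every $G\in V_{\#,\h}^{\text{div}}(Y)^*$ with $G|_{V_0}=0$ has the form $G=\nabla_{\h}\psi$ for some $\psi\in L^2_{\#,\h}(Y)/\R$, \emph{together with} a quantitative bound $\|\psi\|_{L^2/\R}\le C\|G\|_{V^*}$ that will let me reassemble the $x$-dependence afterwards.

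For the core I would introduce the gradient as a bounded operator $\nabla_{\h}:L^2_{\#,\h}(Y)/\R\to V_{\#,\h}^{\text{div}}(Y)^*$ via $\langle\nabla_{\h}\psi,\phi\rangle:=-\int_Y\psi\,\text{div}_{\h}\phi\,dy$, the $Y$-periodicity ensuring no boundary term appears. By construction $V_0$ is exactly the pre-annihilator of the range of $\nabla_{\h}$, so the bipolar theorem in the reflexive space $V_{\#,\h}^{\text{div}}(Y)$ gives $V_0^{\perp}=\overline{\text{range}(\nabla_{\h})}$. Hence it suffices to show the range of $\nabla_{\h}$ is closed, equivalently (closed-range theorem) that $\text{div}_{\h}:V_{\#,\h}^{\text{div}}(Y)\to L^2_{\#,\h}(Y)/\R$ is onto with a bound, equivalently the Ne\v{c}as-type inequality $\|\psi\|_{L^2/\R}\le C\|\nabla_{\h}\psi\|_{V^*}$.

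The main obstacle, and the only genuinely analytic step, is this surjectivity of $\text{div}_{\h}$, i.e.\ solvability of the divergence equation on the periodic cell. I would obtain it by solving the periodic sub-Laplace equation: given mean-zero $g\in L^2_{\#,\h}(Y)$, a Lax--Milgram argument on $H^1_{\#,\h}(Y)/\R$ produces $u$ with $\text{div}_{\h}\nabla_{\h}u=g$, the coercivity coming from the Poincar\'e inequality $\|u-\bar u\|_{L^2(Y)}\le C\|\nabla_{\h}u\|_{L^2(Y)}$ for the H\"ormander pair $X_1,X_2$. Setting $\phi=\nabla_{\h}u\in V_{\#,\h}^{\text{div}}(Y)$ gives $\text{div}_{\h}\phi=g$ with $\|\phi\|_{V}\le C\|g\|$, which is precisely the required surjectivity and hence closed range; this delivers $G=\nabla_{\h}\psi$ with $\|\psi\|_{L^2/\R}\le C\|G\|_{V^*}$.

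Finally I would reinstate the parameter $x$. The assignment $G\mapsto\psi$ is the fixed bounded linear inverse furnished above, so $x\mapsto\psi(x,\cdot)$ is strongly measurable as the composition of this operator with the measurable map $x\mapsto F(x)$, and the uniform bound $\|\psi(x)\|_{L^2/\R}\le C\|F(x)\|_{V^*}$ integrates to give $\psi\in L^2(\Omega;L^2_{\#,\h}(Y)/\R)$ with $F=\nabla_{\h,y}\psi$, as claimed. I expect the periodic Heisenberg Poincar\'e inequality and the solvability of $\text{div}_{\h}\phi=g$ to be the crux; everything else is soft functional analysis and measurability bookkeeping.
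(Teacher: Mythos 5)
A preliminary remark that frames the whole comparison: the paper contains no proof of this statement. Theorem \ref{div-perp} is explicitly introduced as ``a version of Theorem 2.16 from \cite{FM02}'' and is used as a black box, so the only meaningful benchmark is the proof in Franchi--Tesi. Measured against that, your route is the standard one and essentially the same in spirit: localize in $x$ by testing with $\theta(x)v(y)$ over a countable dense family of divergence-free $v$ (legitimate, since $V^{\text{div}}_{\#,\h}(Y)$ is separable, being a completion of smooth sections); identify the annihilator of the divergence-free subspace $V_0$ with the range of the adjoint of $\text{div}_{\h}$; obtain closedness of that range from surjectivity of $\text{div}_{\h}$ onto mean-zero data, which you get by solving the periodic Kohn Laplacian via Lax--Milgram and the periodic Poincar\'e inequality for the H\"ormander pair $X_1,X_2$ (valid because the quotient of $\h^1$ by the lattice of translations $\tau_{2k}$ is compact and horizontally connected); then reinstate measurable $x$-dependence through the uniform bound and the bounded inverse $(T^*)^{-1}$ on its closed range. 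The analytic crux you isolate---the cell Poincar\'e inequality and solvability of $\text{div}_{\h}\phi=g$---is exactly the right one, and the measurability bookkeeping at the end is handled correctly.

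Two points you wave through as ``soft'' do need genuine, if short, arguments. First, for your pairing $\langle \n_{\h}\psi,\phi\rangle:=-\int_Y \psi\, \text{div}_{\h}\phi\, dy$ to be well defined on the quotient $L^2_{\#,\h}(Y)/\R$ you must know $\int_Y \text{div}_{\h}\phi\, dy=0$ for every $\phi\in V^{\text{div}}_{\#,\h}(Y)$; this is not a bare ``periodicity kills the boundary term'', because $\text{div}_{\h}\phi=\text{div}(C^t\phi)$ and $C^t\phi$ is \emph{not} Euclidean-periodic (Heisenberg translations shear the third coordinate). The identity is true---match opposite faces of the tile using the measure-preserving shear---but it must be proved, and it is also what makes your pre-annihilator computation $^{\perp}(\mathrm{range}\,\n_{\h})=V_0$ come out. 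Second, after Lax--Milgram you set $\phi=\n_{\h}u$ and assert $\phi\in V^{\text{div}}_{\#,\h}(Y)$; since the paper defines this space as the completion of \emph{smooth} periodic sections, you owe a density argument, and it is slightly delicate because group convolution commutes with left-invariant vector fields only on one side (right convolution preserves left-periodicity but gives $X(u*\rho)=u*(X\rho)$, so passing to $(Xu)*\rho$ needs a Friedrichs-type commutator step). Both gaps are fillable by standard subelliptic machinery, so I would record your proposal as a correct proof in outline, consistent with the strategy of the cited source, modulo these two routine but non-vacuous verifications.
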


\section{Definition and properties of unfolding operator}\label{defofunfol}
It has been proved in \cite{MUNA97} for $Y=[-1,1)^3$ that there is a canonical tiling of $\h^1$ associated with the structure of $\h^1$ as a group with dilations,  defined as follows,
\noindent \begin{defn}
Let $\varepsilon>0$ be fixed. Let a typical point of $\mathbb{Z}^3$ is denoted by $k=(k_1,k_2,k_3).$ Define $Y_{k}^\varepsilon=\delta_{\varepsilon}(2k\cdot Y).$
Then $\{Y_k^{\varepsilon}:~k\in \mathbb{Z}^3\}$ is tiling a of 
$\h^1,$ i.e.,\\

(i) $\ds Y_k^{\varepsilon}\cap Y_h^{\varepsilon}=\phi$ if $k\neq h$\\

(ii) $\ds \h^1=\ds \bigcup_{k\in \mathbb{Z}^3}\delta_{\varepsilon}(2k\cdot Y).$
\end{defn} 
The above tilling also holds for $Y=[0,2)^3$. We will use  the tiling of $\Omega$ with $Y=[0,2)^3.$
A slightly modified definition of greatest integer function will be used.
Let $x \in \h^1$, then  $x\in 2k \cdot Y$ for some $k \in \mathbb{Z}^3.$ So, for some $y\in Y$, we can write $x=2k \cdot y.$ Hence, we have $$x_1=2k_1+y_1, ~x_2=2k_2+y_2, ~x_3=2k_3+y_3+4(k_2y_1-k_1y_2).$$ It shows that $2k_1$, $2k_2,$ and $2k_3$ are the greatest even integer less than $x_1,x_2$ and \\$\ds (x_3-4k_2y_1+4y_1k_2).$ This leads us to define greatest even integer function. 
For any $r\in \R$ define  $[r]_e$= greatest even integers less than or equal to $r$. Analogous definition of even fractional part is $\{r\}_e=r-[r]_e.$
 For any $x\in \h^1$, define $$[x]_\h=\frac{1}{2}\left([x_1]_e,[x_2]_e,[x_3-2([x_2]_e\{x_1\}-[x_1]_e\{x_2\})]_e\right).$$ The fractional part of $x$ in $\h^1$, is defined by, \begin{align*} &\{x\}_\h= 2[x]^{-1}_\h \cdot x\\ &=(x_1-[x_1]_e,x_2-[x_2]_e,x_3-[x_3-2([x_2]_e\{x_1\}_e-[x_1]_e\{x_2\}_e)]_e-2([x_2]_e x_1-[x_1]_e x_2))\\
 &=\left(\{x_1\}_e,\,\{x_2\}_e,\, x_3-[x_3-2([x_2]_e\{x_1\}_e-[x_1]_e\{x_2\}_e)]_e-2([x_2]_e x_1-[x_1]_e x_2) \right)
\end{align*}
Now, note that $x_i=[x_i]_e+\{x_i\}_e$ for $i=1,2.$  Using this identity, we have \begin{align*}
2([x_2]_e x_1-[x_1]_e x_2)&=2([x_2]_e[x_1]_e+[x_2]_e\{x_1\}_e)-[x_1]_e[x_2]_e-[x_1]_e\{x_2\}_e\\ &=2([x_2]_e\{x_1\}_e-[x_1]_e\{x_2\}_e)
\end{align*}
Hence, for $x\in \h^1$, the definition for the fractional part can be rewritten as$$\{x\}_\h=(\{x_1\}_e,\,\{x_2\}_e,\, x_3-[x_3-2([x_2]_e\{x_1\}_e-[x_1]_e\{x_2\}_e)]_e-2([x_2]_e\{x_1\}_e-[x_1]_e\{x_2\}_e))$$ 
 Now for any $x\in \delta_{\varepsilon}(2k\cdot Y),$ we can recover $k$ from $x$ as $$k=(k_1,k_2,k_3)=\frac{1}{2}\left(\left[\frac{x_1}{\varepsilon}\right]_e,\left[\frac{x_2}{\varepsilon}\right]_e,\left[\frac{x_3}{\varepsilon^2}-2[x_2/\varepsilon]_e\{x_1/\varepsilon\}_e+2[x_1/\varepsilon]_e\{x_2/\varepsilon\}_e\right]_e\right)$$
Hence using the definition of $[~]_\h$ and $\{\}_\h$, we can write $$k=\left[\delta_{\frac{1}{\varepsilon}}x\right]_\h~\text{ and} ~x=2 \delta_{\varepsilon}\left[ \delta_{\frac{1}{\varepsilon}}x\right]_\h\cdot \delta _{\varepsilon} \left\{\delta_{\frac{1}{\varepsilon}} x\right\}_\h.$$

\noindent Let $\varepsilon>0$, and $\Omega \subset \h^1$ is a  bounded domain. Let $E_{\varepsilon}=\{k\in \mathbb{Z}^3:~Y_k^{\varepsilon}\subset \Omega\} $, $\Omega_\varepsilon=\ds\bigcup_{k\in E_{\varepsilon}}Y_{k}^\varepsilon,$ and $\Lambda_\varepsilon=\Omega\backslash\Omega_\varepsilon.$ Now with the above notations, we are in a position to define unfolding operator in the context of Heisenberg group.
\noindent \begin{defn}
Let $\varepsilon>0,$ then the $\varepsilon$-unfolding of a function $\phi:\Omega\to \R$ is the function $T^{\varepsilon}\phi:\Omega\times Y \to \R$  which defined as
\begin{align*}
T^{\varepsilon}(\phi)(x,y)=
\begin{cases}
&\phi\left( \delta_{\varepsilon}\left(2\left[\delta_{\frac{1}{\varepsilon}}x\right]_\h\right)\cdot \delta_{\varepsilon}y  \right)~~for~(x,y) \in \Omega_ \varepsilon \times Y,\\
 &0~~~~~~~for~~(x,y) \in \Lambda_{\varepsilon}\times  Y. 
\end{cases}
\end{align*}
The operator $T{\varepsilon}$ is called unfolding operator.
\end{defn}
Now we will see some important properties of $T^{\varepsilon}$ in following propositions.
\begin{prop}
Let the unfolding operator $T^{\varepsilon}$ defined as above, then $T^{\varepsilon}$ is linear and for $\phi_1,\phi_2 :\Omega\to \R$, $T^\varepsilon (\phi_1\phi_2)=T^{\varepsilon}(\phi_1)T^{\varepsilon}(\phi_2)$.  
\end{prop}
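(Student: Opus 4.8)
The plan is to observe that, away from the boundary layer, $T^{\varepsilon}$ is nothing but precomposition of $\phi$ with a map that does \emph{not} depend on $\phi$, so that both asserted properties are inherited from the corresponding pointwise operations on real numbers. Accordingly I would argue by a case split on whether $(x,y)$ lies in $\Omega_\varepsilon \times Y$ or in $\Lambda_\varepsilon \times Y$, verifying the two algebraic identities pointwise in each region and then concluding that they hold as equalities of functions on $\Omega\times Y$.

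First I would fix $(x,y) \in \Omega_\varepsilon \times Y$ and introduce the shorthand
$$z(x,y) := \delta_{\varepsilon}\left(2\left[\delta_{\frac{1}{\varepsilon}}x\right]_\h\right)\cdot \delta_{\varepsilon}y \in \h^1,$$
which is precisely the argument at which the unfolding evaluates its input. The key point is that $z(x,y)$ is determined by $(x,y)$ and $\varepsilon$ alone, independently of the function being unfolded; this uses the fact, recorded just after the tiling definition, that every $x$ lies in a \emph{unique} cell $\delta_{\varepsilon}(2k\cdot Y)$, so that $k=\left[\delta_{\frac{1}{\varepsilon}}x\right]_\h$ is single-valued and $z(x,y)$ is unambiguously defined. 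Consequently $T^{\varepsilon}(\phi)(x,y)=\phi(z(x,y))$, and the two identities reduce to the tautologies $(\phi_1\phi_2)(z(x,y))=\phi_1(z(x,y))\,\phi_2(z(x,y))$ and $(a\phi_1+b\phi_2)(z(x,y))=a\,\phi_1(z(x,y))+b\,\phi_2(z(x,y))$ for scalars $a,b$, which hold because the product and linear combination of functions are defined pointwise.

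Next I would treat $(x,y)\in\Lambda_\varepsilon\times Y$, where by definition $T^{\varepsilon}(\phi)(x,y)=0$ for every $\phi$. Then $T^{\varepsilon}(\phi_1\phi_2)(x,y)=0=0\cdot 0=T^{\varepsilon}(\phi_1)(x,y)\,T^{\varepsilon}(\phi_2)(x,y)$ and $T^{\varepsilon}(a\phi_1+b\phi_2)(x,y)=0=a\cdot 0+b\cdot 0$, so both identities again hold trivially. Since the two regions cover $\Omega\times Y$ and the identities hold pointwise on each, they hold as equalities of functions, which establishes linearity and the product rule.

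I expect no genuine obstacle here: the content is entirely formal once one recognizes $T^{\varepsilon}$ as a pullback along the map $z(\cdot,\cdot)$. The only point requiring any care is the well-definedness of the evaluation point, that is, that the integer/fractional-part decomposition assigns a unique $k$ to each $x$; this is guaranteed by the disjointness property (i) of the tiling, so no overlap or multivaluedness issue arises.
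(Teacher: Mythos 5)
Your proof is correct and takes essentially the same approach as the paper, which disposes of the statement with the single remark that it ``follows directly from the definition''; your case split on $\Omega_\varepsilon\times Y$ versus $\Lambda_\varepsilon\times Y$, together with the observation that $T^{\varepsilon}$ is precomposition with the $\phi$-independent map $z(x,y)$ (well defined by the disjointness of the tiling), simply spells out that remark in full.
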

This follows directly from the definition.
The first important result  to be proved is an $L^1$ integral identity.

\begin{prop}
Let $\phi\in L^1(\Omega).$ Then,  $$\ds \int_{\Omega_\varepsilon}\phi \, dx=\frac{1}{|Y|}\int_{\Omega\times Y}T^{\varepsilon}(\phi)\, dxdy,$$.
\end{prop}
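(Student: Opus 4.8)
The plan is to reduce the identity to a cell-by-cell computation, exploiting that the unfolded function is piecewise constant in the macroscopic variable $x$ on each tile of the covering. Since $T^{\varepsilon}(\phi)$ vanishes on $\Lambda_\varepsilon\times Y$ by definition, the right-hand side immediately collapses to $\frac{1}{|Y|}\int_{\Omega_\varepsilon\times Y}T^{\varepsilon}(\phi)\,dxdy$, and because $\Omega_\varepsilon=\bigcup_{k\in E_\varepsilon}Y_k^{\varepsilon}$ is a disjoint union of tiles, I would reduce everything to proving the single-cell identity
$$\int_{Y_k^\varepsilon}\phi\,dx=\frac{1}{|Y|}\int_{Y_k^\varepsilon\times Y}T^{\varepsilon}(\phi)\,dxdy$$
for each fixed $k\in E_\varepsilon$, and then summing over $k$.

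Fixing $k$, the recovery formula established above gives $[\delta_{1/\varepsilon}x]_\h=k$ for every $x\in Y_k^\varepsilon$, so on $Y_k^\varepsilon\times Y$ the unfolded function reads $T^{\varepsilon}(\phi)(x,y)=\phi(\delta_\varepsilon(2k)\cdot\delta_\varepsilon y)$, which is \emph{independent of} $x$. Hence the integral factorizes as
$$\int_{Y_k^\varepsilon\times Y}T^{\varepsilon}(\phi)\,dxdy=|Y_k^\varepsilon|\int_{Y}\phi(\delta_\varepsilon(2k)\cdot\delta_\varepsilon y)\,dy,$$
and the tile volume is computed from the anisotropic dilation scaling $|\delta_\varepsilon(S)|=\varepsilon^4|S|$ together with left-invariance of the Haar (Lebesgue) measure, giving $|Y_k^\varepsilon|=|\delta_\varepsilon(2k\cdot Y)|=\varepsilon^4|2k\cdot Y|=\varepsilon^4|Y|$.

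It remains to rewrite $\int_{Y_k^\varepsilon}\phi\,dx$ through the same change of variables. Here I would first record that $\delta_\varepsilon$ is a group automorphism of $\h^1$: a direct check from the definitions of $\delta_\lambda$ and the group law shows $\delta_\varepsilon(p\cdot q)=\delta_\varepsilon(p)\cdot\delta_\varepsilon(q)$, so the map $y\mapsto\delta_\varepsilon(2k)\cdot\delta_\varepsilon y=\delta_\varepsilon(2k\cdot y)$ sends $Y$ diffeomorphically onto $Y_k^\varepsilon$. Its Jacobian is the composition of the left translation $\tau_{\delta_\varepsilon(2k)}$, of Jacobian $1$ by left-invariance, with $\delta_\varepsilon$, whose Jacobian determinant is $\varepsilon^4$; the substitution therefore yields $\int_{Y_k^\varepsilon}\phi\,dx=\varepsilon^4\int_Y\phi(\delta_\varepsilon(2k)\cdot\delta_\varepsilon y)\,dy$. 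Combining the last three displays gives $\int_{Y_k^\varepsilon\times Y}T^{\varepsilon}(\phi)=\varepsilon^4|Y|\int_Y\phi(\delta_\varepsilon(2k)\cdot\delta_\varepsilon y)\,dy=|Y|\int_{Y_k^\varepsilon}\phi$, which is precisely the per-cell identity; summing over $k\in E_\varepsilon$ closes the argument.

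The calculation is essentially routine; the only point requiring care — and the main conceptual difference from the Euclidean case — is that both the tile volume and the change-of-variables Jacobian must be handled through the non-isotropic scaling $\varepsilon^4$ and the left-invariance of Haar measure, rather than through the ordinary factor $\varepsilon^n$. The property that $\delta_\varepsilon$ is a homomorphism is exactly what makes the two occurrences of the factor $\varepsilon^4$ cancel against $1/|Y|$, so I expect the main (modest) obstacle to be isolating this homomorphism property cleanly before running the substitution, so that the measure bookkeeping on the non-commutative group is unambiguous.
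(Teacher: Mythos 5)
Your argument is correct and essentially identical to the paper's proof: both reduce to the tiles $Y_k^{\varepsilon}$, use that $T^{\varepsilon}(\phi)$ is constant in $x$ on each tile with $|Y_k^{\varepsilon}|=\varepsilon^4|Y|$, and undo the unfolding by the change of variables $y\mapsto \delta_{\varepsilon}(2k)\cdot\delta_{\varepsilon}y$ with Jacobian $\varepsilon^4$, then sum over $k\in E_{\varepsilon}$. The only (harmless) difference is presentational: you obtain the Jacobian abstractly from the automorphism property of $\delta_{\varepsilon}$ and left-invariance of the Haar (Lebesgue) measure, whereas the paper writes the substitution explicitly in coordinates, $z=\left(\varepsilon(2k_1+y_1),\ \varepsilon(2k_2+y_2),\ \varepsilon^2(2k_3+y_3+4(k_2y_1-k_1y_2))\right)$, and records $dz=\varepsilon^4\,dy$.
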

  
\begin{proof}
\begin{align*}
\frac{1}{|Y|}\int_{\Omega \times Y}T^{\varepsilon}(\phi) \, dxdy&=\frac{1}{|Y|}\int_{\Omega}\int_{Y}\phi\left( \delta_{\varepsilon}\left(2\left[\delta_{\frac{1}{\varepsilon}}x\right]_\h \right)\cdot \delta_{\varepsilon}y  \right)\, dxdy\\
&=\ds \sum_{k\in E_{\varepsilon}}\frac{1}{|Y|}\int_{Y_k^{\varepsilon}}\int_{Y}\phi(\delta_{\varepsilon}(2k)\cdot \delta_{\varepsilon} y)\, dxdy\\
&=\ds \sum_{k\in E_{\varepsilon}}\frac{1}{|Y|}\int_{Y}\phi(\delta_{\varepsilon}(2k)\cdot \delta_{\varepsilon} y)|Y_{k}^{\varepsilon}|dy\\
&=\ds \sum_{k\in E_{\varepsilon}}\frac{1}{|Y|}\int_{Y}\phi(\delta_{\varepsilon}(2k)\cdot \delta_{\varepsilon} y)\varepsilon^{4}|Y| dy
\end{align*}
We make the following change of variable as $$z_1=\varepsilon (2k_1+y_1), \; z_2=\varepsilon(2k_2+y_2),\; z_3=\varepsilon^2(2k_3+y_3+4(k_2 y_1- k_1 y_2)). $$ We have $dz=\varepsilon^4 dy.$
 By applying the above change of variable, we get the following equality
  \begin{align*}
  \frac{1}{|Y|}\int_{\Omega \times Y}T^{\varepsilon}(\phi) \, dxdy=\sum_{k\in E_{\varepsilon}}\int_{Y_k^\varepsilon}\phi(z) dz=\int_{\Omega_\varepsilon}\phi.
  \end{align*}
\end{proof}
\noindent   The above integral identity gives us the following proposition.
 \begin{prop}
 \begin{enumerate}
 
  \item For $p\in (1,\infty)$ the operator $T^{\varepsilon}$ is linear continuous map from $L^p(\Omega)$ to $L^p (\Omega\times Y).$
   \vspace{3mm}
  \item $\ds \frac{1}{|Y|}\int_{\Omega\times Y} T^{\varepsilon}(\phi)\, dxdy=\int_{\Omega}\phi \, dx-\int_{\Lambda_\varepsilon} \phi \, dx=\int_{\Omega_\varepsilon}\phi \, dx$
  \vspace{3mm}
  \item $\left|\int_{\Omega}\phi \, dx- \frac{1}{|Y|}\int_{\Omega\times Y}T^{\varepsilon}\phi\right|\leqslant \int_{\Lambda_\varepsilon}|\phi|\, dx $
  \vspace{3mm}
    \item $\|T^{\varepsilon}(\phi)\|_{L^p(\Omega\times Y)}\leqslant |Y|^{\frac{1}{p}}\|\phi\|_{L^p(\Omega)}$.
   \end{enumerate}
   \end{prop}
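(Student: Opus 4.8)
The plan is to derive all four assertions from the $L^1$ integral identity of the preceding proposition, together with the multiplicative property $T^{\varepsilon}(\phi_1\phi_2)=T^{\varepsilon}(\phi_1)T^{\varepsilon}(\phi_2)$, proving them in the order (2), (3), (4), (1).

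For (2), I would start from the disjoint decomposition $\Omega=\Omega_\varepsilon\cup\Lambda_\varepsilon$, which gives $\int_{\Omega}\phi\,dx=\int_{\Omega_\varepsilon}\phi\,dx+\int_{\Lambda_\varepsilon}\phi\,dx$. The integral identity $\int_{\Omega_\varepsilon}\phi\,dx=\frac{1}{|Y|}\int_{\Omega\times Y}T^{\varepsilon}(\phi)\,dxdy$ then yields both equalities of (2) simultaneously. Statement (3) is immediate from (2): the quantity $\int_{\Omega}\phi\,dx-\frac{1}{|Y|}\int_{\Omega\times Y}T^{\varepsilon}\phi$ equals $\int_{\Lambda_\varepsilon}\phi\,dx$, and the triangle inequality for integrals bounds its absolute value by $\int_{\Lambda_\varepsilon}|\phi|\,dx$.

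The central step is (4), the norm estimate. Here I would exploit the multiplicative property: since on $\Omega_\varepsilon\times Y$ the operator $T^{\varepsilon}$ acts by composing $\phi$ with a fixed measurable map and it vanishes on $\Lambda_\varepsilon\times Y$, one has the pointwise identity $|T^{\varepsilon}(\phi)|^p=T^{\varepsilon}(|\phi|^p)$ a.e.\ on $\Omega\times Y$. For $\phi\in L^p(\Omega)$ we have $|\phi|^p\in L^1(\Omega)$, so the integral identity applies to $|\phi|^p$ and gives
$$\|T^{\varepsilon}(\phi)\|_{L^p(\Omega\times Y)}^{p}=\int_{\Omega\times Y}T^{\varepsilon}(|\phi|^p)\,dxdy=|Y|\int_{\Omega_\varepsilon}|\phi|^p\,dx\leqslant|Y|\int_{\Omega}|\phi|^p\,dx.$$
Taking $p$-th roots produces $\|T^{\varepsilon}(\phi)\|_{L^p(\Omega\times Y)}\leqslant|Y|^{1/p}\|\phi\|_{L^p(\Omega)}$. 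Finally, (1) follows from (4) together with the already-established linearity of $T^{\varepsilon}$: the estimate in (4) shows precisely that $T^{\varepsilon}$ maps $L^p(\Omega)$ boundedly into $L^p(\Omega\times Y)$, with operator norm at most $|Y|^{1/p}$.

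The only point requiring a little care — the main obstacle, though it is minor — is justifying the pointwise identity $|T^{\varepsilon}(\phi)|^p=T^{\varepsilon}(|\phi|^p)$ and, with it, the measurability of $T^{\varepsilon}(\phi)$ on $\Omega\times Y$. This rests on the observation that $x\mapsto[\delta_{1/\varepsilon}x]_\h$ is constant (equal to $k$) on each cell $Y_k^{\varepsilon}$, so that on $Y_k^{\varepsilon}\times Y$ the argument $\delta_{\varepsilon}(2k)\cdot\delta_{\varepsilon}y$ of $\phi$ depends measurably on $y$ alone; this is exactly the structure that legitimises the change of variables used in the integral identity and that makes composition commute with the maps $t\mapsto|t|^p$.
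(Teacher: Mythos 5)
Your proposal is correct and follows exactly the route the paper intends: the paper offers no written proof, stating only that ``the above integral identity gives us the following proposition,'' and your argument is precisely that derivation spelled out --- (2) from the decomposition $\Omega=\Omega_\varepsilon\cup\Lambda_\varepsilon$, (3) by the triangle inequality, (4) via the pointwise identity $|T^{\varepsilon}(\phi)|^p=T^{\varepsilon}(|\phi|^p)$ applied to $|\phi|^p\in L^1(\Omega)$, and (1) from (4) with linearity. Your closing remark on measurability and the cell-wise constancy of $x\mapsto\left[\delta_{\frac{1}{\varepsilon}}x\right]_\h$ is a detail the paper leaves implicit, and it is handled correctly.
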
 
   \vspace{1cm}
\noindent   Here, we are considering the domain as a bounded open subset of $\h^1.$ Since the Hausdorff dimension of $\h^1$ is $4$  implies  that the cardinality of the set $$\{k\in \mathbb{Z}^3 :Y_{k}^{\varepsilon}\cap\p \Omega \; \text{is non-empty}\} $$ is $O\left(\frac{1}{\varepsilon^3}\right)$. Hence  $|\Lambda_{\varepsilon}|=O(\varepsilon)$. Also $\chi_{\Lambda_{\varepsilon}}\to 0$ point wise  as $\varepsilon \to 0.$ Hence, we get the following proposition.
  \begin{prop}\label{uci}
Let $u_{\varepsilon}$ be a bounded sequence in $L^2(\Omega)$ with $p\in (1,\infty)$ and $v\in L^{q}(\Omega)$, with $\frac{1}{p}+\frac{1}{q}=1,$
then
$\int_{\Lambda_\varepsilon}u_{\varepsilon}v\to 0$ as $\varepsilon \to 0.$
  \end{prop}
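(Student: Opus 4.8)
The plan is to reduce the estimate to H\"older's inequality followed by the absolute continuity of the Lebesgue integral on the shrinking set $\Lambda_\varepsilon$. Set $M:=\sup_\varepsilon \|u_\varepsilon\|_{L^p(\Omega)}$, which is finite precisely because $\{u_\varepsilon\}$ is assumed bounded in $L^p(\Omega)$. The point of having boundedness (rather than convergence) of the sequence is that it lets this single constant $M$ absorb all the $\varepsilon$-dependence coming from $u_\varepsilon$, while the decay as $\varepsilon\to0$ is carried entirely by the fixed function $v$.

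First I would apply H\"older's inequality with the conjugate exponents $p,q$ on the set $\Lambda_\varepsilon$, which gives
$$\left|\int_{\Lambda_\varepsilon}u_\varepsilon v\,dx\right|\le \|u_\varepsilon\|_{L^p(\Lambda_\varepsilon)}\,\|v\|_{L^q(\Lambda_\varepsilon)}\le M\left(\int_{\Lambda_\varepsilon}|v|^q\,dx\right)^{\frac1q}.$$
Since $M$ is a fixed constant independent of $\varepsilon$, it remains only to show that the tail integral $\int_{\Lambda_\varepsilon}|v|^q\,dx$ tends to $0$.

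For this second step I would invoke the two facts recorded just before the statement, namely $|\Lambda_\varepsilon|=O(\varepsilon)$ and $\chi_{\Lambda_\varepsilon}\to 0$ pointwise as $\varepsilon\to0$. Because $v\in L^q(\Omega)$, the function $|v|^q$ is integrable, so $\chi_{\Lambda_\varepsilon}|v|^q\to 0$ pointwise and is dominated by $|v|^q\in L^1(\Omega)$; the Dominated Convergence Theorem then yields $\int_{\Lambda_\varepsilon}|v|^q\,dx=\int_\Omega \chi_{\Lambda_\varepsilon}|v|^q\,dx\to 0$. Alternatively, one may cite the absolute continuity of the Lebesgue integral directly from $|\Lambda_\varepsilon|\to 0$, which gives the same conclusion without reference to the pointwise convergence. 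Feeding this back into the H\"older bound produces $\int_{\Lambda_\varepsilon}u_\varepsilon v\to 0$, as claimed.

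There is no genuine obstacle in this argument; the whole content is the uniformity of the estimate in $\varepsilon$. The only place requiring a word of care is ensuring that the H\"older factor multiplying the vanishing tail is uniformly bounded, which is exactly the role of the hypothesis $\sup_\varepsilon\|u_\varepsilon\|_{L^p(\Omega)}<\infty$.
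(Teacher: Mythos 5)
Your proposal is correct and follows essentially the same route as the paper: Hölder's inequality with the conjugate exponents $p,q$ on $\Lambda_\varepsilon$, combined with the dominated convergence theorem applied to $\chi_{\Lambda_\varepsilon}|v|^q$ (using $\chi_{\Lambda_\varepsilon}\to 0$ pointwise and $|v|^q\in L^1(\Omega)$). You merely spell out details the paper leaves implicit, such as the uniform bound $M=\sup_\varepsilon\|u_\varepsilon\|_{L^p(\Omega)}$ and the interpretation of the hypothesis as $L^p$-boundedness (the statement's ``$L^2$'' is evidently a typo for $L^p$).
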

\begin{proof}
Observe that $\chi_{\Lambda_{\varepsilon}}\to 0 $ as $\varepsilon\to 0$. Using the Lebesgue
dominated convergence theorem, we get $ \int_{\Omega} \chi_{\Lambda_{\varepsilon}}|v|^q \, dx \to 0$, and then by using the Holder's inequality we have $\int_{\Lambda_{\varepsilon}} u_{\varepsilon}v\to 0.$  
\end{proof}
\noindent \begin{rem}
In the theory the homogenization or multiscale analysis  the final goal is to pass to the limit $\varepsilon\to 0$. If functions in some integral satisfy the hypothesis of the Proposition \ref{uci}; say for example, $u_{\varepsilon}$ and $v$ are as in the Proposition \ref{uci}, then, we use the following convention,
$$\int_{\Omega}u_{\varepsilon}v=\frac{1}{|Y|}\int_{\Omega\times Y}T^{\varepsilon}(u_{\varepsilon})T^{\varepsilon}(\phi) $$
that is instead of writing approximately equal, we choose to write equality since  at the end,  we will pass to the limit $\varepsilon \to 0.$   
\end{rem}

 \begin{lem}\label{linftyconv} Let $\phi \in C_c^{\infty}(\Omega).$ Then, $\|T^{\varepsilon}(\phi)-\phi\|_{\infty}\to 0$ in $\Omega\times Y.$
  \end{lem}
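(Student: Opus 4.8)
The plan is to reduce the whole statement to one observation: for $(x,y)\in\Omega_\varepsilon\times Y$ the point at which $\phi$ is evaluated in $T^\varepsilon(\phi)(x,y)$ differs from $x$ only by a left translation of Heisenberg size $O(\varepsilon)$, after which uniform continuity of $\phi$ finishes the job. Concretely, fix $x\in\Omega_\varepsilon$ and let $k=[\delta_{1/\varepsilon}x]_\h$ be the unique index with $x\in Y_k^\varepsilon$. Setting $a=\delta_\varepsilon(2[\delta_{1/\varepsilon}x]_\h)=\delta_\varepsilon(2k)$ and using that $\delta_\varepsilon$ is a group homomorphism (a direct check gives $\delta_\varepsilon(p)\cdot\delta_\varepsilon(q)=\delta_\varepsilon(p\cdot q)$), the scale decomposition recorded just before the definition of $T^\varepsilon$ reads $x=a\cdot\delta_\varepsilon\{\delta_{1/\varepsilon}x\}_\h$, whereas by definition $T^\varepsilon(\phi)(x,y)=\phi(\xi)$ with $\xi=a\cdot\delta_\varepsilon y=\delta_\varepsilon(2k\cdot y)$. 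Both $\{\delta_{1/\varepsilon}x\}_\h$ and $y$ lie in $Y=[0,2)^3$, and since $\xi\in\delta_\varepsilon(2k\cdot Y)=Y_k^\varepsilon\subset\Omega$, the two points $x$ and $\xi$ both remain in the fixed bounded set $\Omega$.

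First I would estimate $d(x,\xi)$. By the left translation invariance and homogeneity of $d$ from Proposition \ref{equ-heidis},
\[
d(x,\xi)=d\big(a\cdot\delta_\varepsilon\{\delta_{1/\varepsilon}x\}_\h,\, a\cdot\delta_\varepsilon y\big)=d\big(\delta_\varepsilon\{\delta_{1/\varepsilon}x\}_\h,\, \delta_\varepsilon y\big)=\varepsilon\, d\big(\{\delta_{1/\varepsilon}x\}_\h,\, y\big).
\]
Because $\overline{Y}$ is compact and the $d$-topology coincides with the Euclidean one, $C_Y:=\sup_{p,q\in Y}d(p,q)<\infty$, so $d(x,\xi)\le\varepsilon\,C_Y$ uniformly in $x,y$. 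Applying the comparison $c_1(\Omega)|p-q|_{\R^3}\le d(p,q)$ from Proposition \ref{equ-heidis} on $\Omega$, which is legitimate precisely because both $x,\xi\in\Omega$, gives $|x-\xi|_{\R^3}\le\varepsilon\,C_Y/c_1(\Omega)$, again uniformly. Extending $\phi$ by zero to all of $\R^3$, it is uniformly continuous with some modulus $\omega$, whence
\[
|T^\varepsilon(\phi)(x,y)-\phi(x)|=|\phi(\xi)-\phi(x)|\le\omega\!\big(\varepsilon\,C_Y/c_1(\Omega)\big)\xrightarrow[\varepsilon\to0]{}0,
\]
uniformly over $\Omega_\varepsilon\times Y$.

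It remains to treat the boundary layer $\Lambda_\varepsilon\times Y$, where $T^\varepsilon(\phi)=0$ so the difference equals $|\phi(x)|$. Each cell has $d$-diameter $\varepsilon C_Y$ by the same homogeneity computation, and any $x\in\Lambda_\varepsilon$ lies in a cell $Y_k^\varepsilon$ with $k\notin E_\varepsilon$, i.e.\ a cell meeting $\Omega^c$; hence $d(x,\Omega^c)\le\varepsilon C_Y$. Since $\mathrm{supp}\,\phi$ is a compact subset of the open set $\Omega$, it has positive $d$-distance $\delta_0$ from the closed set $\Omega^c$, so once $\varepsilon C_Y<\delta_0$ we have $\Lambda_\varepsilon\cap\mathrm{supp}\,\phi=\varnothing$; there $\phi$ vanishes and the difference is identically $0$. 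Combining the two regions yields $\|T^\varepsilon(\phi)-\phi\|_{L^\infty(\Omega\times Y)}\to0$.

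The main obstacle is not any single estimate but keeping all constants uniform while moving between the two geometries: one must check that the evaluation point $\xi$ never leaves the bounded set $\Omega$, so that the comparison constant $c_1(\Omega)$ is admissible, and one must control the boundary layer $\Lambda_\varepsilon$, where the non-Euclidean scaling (recall $|\Lambda_\varepsilon|=O(\varepsilon)$ coming from the Hausdorff dimension $4$) could a priori interfere but is rendered harmless by the compact support of $\phi$.
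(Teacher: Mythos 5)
Your proof is correct and follows essentially the same route as the paper's: left-invariance and homogeneity of $d$ reduce the distance between $x$ and the evaluation point $\xi$ to $\varepsilon\, d\left(\left\{\delta_{\frac{1}{\varepsilon}}x\right\}_\h, y\right)\le C\varepsilon$, and the metric comparison of Proposition~\ref{equ-heidis} together with the regularity of $\phi$ (the paper invokes Lipschitz continuity, you invoke uniform continuity --- interchangeable here) completes the estimate on $\Omega_\varepsilon\times Y$. Your only addition is the explicit treatment of the boundary layer $\Lambda_\varepsilon\times Y$, where $T^{\varepsilon}(\phi)=0$ and the paper's proof is silent; this is a genuine (if minor) gap in the paper that your compact-support argument correctly closes, since for $\varepsilon$ small every point of $\Lambda_\varepsilon$ lies within $d$-distance $O(\varepsilon)$ of $\Omega^{c}$ and hence outside $\mathrm{supp}\,\phi$.
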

  \begin{proof} Since $\phi$ is a compactly supported smooth function, it is Lipschitz say with Lipschitz constant $L$.
  Consider \begin{align*}
  |T^{\varepsilon}(\phi)(x,y)-\phi(x)|&=\phi\left(2\delta_\varepsilon \left[\delta_{\frac{1}{\varepsilon}}x\right]_{\h}\cdot \delta_{\varepsilon} y\right) -\phi(x)\\
  &\leq L\left |x- 2\delta_\varepsilon \left[\delta_{\frac{1}{\varepsilon}}x\right]_\h\cdot \delta_{\varepsilon} y\right|_{\R^3}\\
  &\leq C\,  d\left(2\delta_{\varepsilon}\left[\delta_{\frac{1}{\varepsilon}}x\right]_{\h}\cdot \delta_{\varepsilon} \left\{\delta_{\frac{1}{\varepsilon}}x\right\},\, 2\delta_\varepsilon \left[\delta_{\frac{1}{\varepsilon}}x\right]_\h\cdot \delta_{\varepsilon} y\right)\\
&\leq  d\left(\delta_{\varepsilon}\left\{\delta_{\frac{1}{\varepsilon}}x\right\},\, \delta_{\varepsilon} y\right)  
  \leq C \varepsilon.
  \end{align*}
 
The last two inequalities follows from Proposition \ref{equ-heidis}. Passing to the limit $\varepsilon \to 0$ gives the desired result. 
 \end{proof}
 Since $C_c^{\infty}(\Omega)$ is dense in $L^2(\Omega),$ above lemma leads to the following;
\begin{lem}\label{strong-fix-conv}
For $v\in L^{2}(\Omega),$ we have $T^{\varepsilon}(v)\to v$ strongly in $L^2(\Omega\times Y).$
\end{lem}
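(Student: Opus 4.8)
The plan is to prove $T^{\varepsilon}(v) \to v$ strongly in $L^2(\Omega \times Y)$ by a standard density argument, using Lemma \ref{linftyconv} as the base case on a dense subclass and controlling the approximation via the uniform operator bound in part (4) of the preceding Proposition. The key point is that $T^{\varepsilon}$ is a bounded linear operator from $L^2(\Omega)$ into $L^2(\Omega \times Y)$ with operator norm at most $|Y|^{1/2}$, uniformly in $\varepsilon$, and that strong convergence is already known on the dense subspace $C_c^{\infty}(\Omega)$. The combination of a uniform bound plus convergence on a dense set is exactly the hypothesis needed to conclude convergence on the whole space.

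First I would fix $v \in L^2(\Omega)$ and, given $\eta > 0$, choose $\phi \in C_c^{\infty}(\Omega)$ with $\|v - \phi\|_{L^2(\Omega)} < \eta$, which is possible since $C_c^{\infty}(\Omega)$ is dense in $L^2(\Omega)$. Then I would split using the triangle inequality in $L^2(\Omega \times Y)$:
\begin{align*}
\|T^{\varepsilon}(v) - v\|_{L^2(\Omega \times Y)} \leq \|T^{\varepsilon}(v - \phi)\|_{L^2(\Omega \times Y)} + \|T^{\varepsilon}(\phi) - \phi\|_{L^2(\Omega \times Y)} + \|\phi - v\|_{L^2(\Omega \times Y)}.
\end{align*}
Here I use linearity of $T^{\varepsilon}$ (established in the first Proposition) to write $T^{\varepsilon}(v) - T^{\varepsilon}(\phi) = T^{\varepsilon}(v - \phi)$. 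The first term is bounded by $|Y|^{1/2}\|v - \phi\|_{L^2(\Omega)} < |Y|^{1/2}\eta$ using part (4) of the Proposition. For the last term, note that $v$ and $\phi$ are functions of $x$ only, so their difference as elements of $L^2(\Omega \times Y)$ has norm $|Y|^{1/2}\|v - \phi\|_{L^2(\Omega)} < |Y|^{1/2}\eta$.

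For the middle term I would invoke Lemma \ref{linftyconv}, which gives $\|T^{\varepsilon}(\phi) - \phi\|_{\infty} \to 0$ on $\Omega \times Y$; since $\Omega \times Y$ has finite measure, the $L^{\infty}$ convergence dominates the $L^2$ norm, so $\|T^{\varepsilon}(\phi) - \phi\|_{L^2(\Omega \times Y)} \leq |\Omega \times Y|^{1/2}\|T^{\varepsilon}(\phi) - \phi\|_{\infty} \to 0$ as $\varepsilon \to 0$. Hence for all $\varepsilon$ small enough this term is below $\eta$. Collecting the three estimates yields $\limsup_{\varepsilon \to 0}\|T^{\varepsilon}(v) - v\|_{L^2(\Omega \times Y)} \leq (2|Y|^{1/2} + 1)\eta$, and since $\eta > 0$ is arbitrary the limit is zero. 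The only mild subtlety I expect is bookkeeping: one must be careful that $\phi$ restricted to $\Lambda_\varepsilon$ contributes nothing problematic (the integral identity and the bound in part (4) already account for the region where $T^{\varepsilon}$ vanishes), but no genuine obstacle arises. This argument is entirely routine once the uniform bound and the smooth-function base case are in hand, so there is no real hard step.
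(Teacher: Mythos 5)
Your proof is correct and takes essentially the same route as the paper, which proves the lemma in one line by invoking the density of $C_c^{\infty}(\Omega)$ in $L^{2}(\Omega)$ together with Lemma \ref{linftyconv}; your three-term triangle inequality, with the uniform bound $\|T^{\varepsilon}(\phi)\|_{L^2(\Omega\times Y)}\leq |Y|^{1/2}\|\phi\|_{L^2(\Omega)}$ from part (4) of the preceding proposition, is exactly the standard filling-in of that density argument. The $\Lambda_\varepsilon$ bookkeeping you flag is indeed harmless, since $\operatorname{supp}\phi$ is a positive distance from $\partial\Omega$ and so misses $\Lambda_\varepsilon$ for small $\varepsilon$.
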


\noindent Now, we recall the definition of two-scale convergence given in \cite{FM02} for the Heisenberg group.
\begin{defn}
A family of function $\{u_{\varepsilon}\}\in L^2(\Omega)$ is said to be two-scale converges in $\h^1$ to $u_0\in L^2(\Omega\times Y)$, if for any $\psi\in C_c^{\infty}(\Omega;C_{\#.\h}^{\infty}(Y))$, we have 
\begin{align}
\lim_{\varepsilon\to 0}\int_{\Omega}u_\varepsilon(x)\psi\left(x,\delta_{\frac{1}{\varepsilon}}(x)\right)\, dx
=\frac{1}{|Y|}\int_{\Omega\times Y}u_0(x,y)\psi(x,y)\, dxdy.
\end{align}  
\end{defn}     
 \noindent We have already discussed in the introduction that two scale convergence of a sequence  in $L^{2}(\Omega)$ is equivalent to weak convergence of the unfolded sequence in $L^2(\Omega\times Y)$. This result also holds  in Heisenberg group, which is stated in the following proposition,
 
\begin{prop}
Let $v_{\varepsilon}$ be a bounded sequence in $L^2(\Omega)$. Then the following statements are equivalent,
\begin{enumerate}
\item $v_{\varepsilon}$ two-scale converges in $\h^1$ to $v_0\in L^2(\Omega\times Y)$.\\

\item $T^{\varepsilon}(v_{\varepsilon})$ weakly converges to $v_0\in L^2(\Omega\times Y).$
\end{enumerate}
\end{prop}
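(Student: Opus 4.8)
The plan is to route both implications through two auxiliary facts about the oscillating test function, after which each direction reduces to a single passage to the limit. Fix $\psi\in C_c^{\infty}(\Omega;C_{\#,\h}^{\infty}(Y))$ and abbreviate $\psi^{\varepsilon}(x)=\psi\left(x,\delta_{\frac{1}{\varepsilon}}x\right)$. The first fact, which is the heart of the matter, is that the unfolded oscillating test function forgets its fine scale: I would show that on $\Omega_\varepsilon\times Y$,
$$T^{\varepsilon}(\psi^{\varepsilon})(x,y)=\psi\left(\delta_{\varepsilon}\left(2\left[\delta_{\frac{1}{\varepsilon}}x\right]_{\h}\right)\cdot\delta_{\varepsilon}y,\;y\right),$$
and that the right-hand side converges to $\psi(x,y)$ strongly in $L^2(\Omega\times Y)$ (in fact uniformly). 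The crucial observation is that the dilations are group automorphisms: writing $z=\delta_{\varepsilon}\left(2\left[\delta_{\frac{1}{\varepsilon}}x\right]_{\h}\right)\cdot\delta_{\varepsilon}y$, one gets $\delta_{\frac{1}{\varepsilon}}z=2k\cdot y=\tau_{2k}(y)$ with $k=\left[\delta_{\frac{1}{\varepsilon}}x\right]_{\h}\in\mathbb{Z}^3$, so the $Y$-periodicity of $\psi(z,\cdot)$ collapses $\psi\left(z,\tau_{2k}(y)\right)$ to $\psi(z,y)$. Since $z$ lies within Heisenberg distance $O(\varepsilon)$ of $x$ (the estimate already used in the proof of Lemma \ref{linftyconv}, via Proposition \ref{equ-heidis}) and $\psi$ is uniformly continuous in its first argument, the claimed strong convergence follows.

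The second fact is that the two natural pairings share a common limit. Combining the multiplicativity of $T^{\varepsilon}$ with the $L^1$-integral identity gives
$$\int_{\Omega_\varepsilon}v_\varepsilon\,\psi^\varepsilon\,dx=\frac{1}{|Y|}\int_{\Omega\times Y}T^\varepsilon(v_\varepsilon)\,T^\varepsilon(\psi^\varepsilon)\,dx\,dy,$$
while $\left|\int_{\Lambda_\varepsilon}v_\varepsilon\psi^\varepsilon\,dx\right|\le\|\psi\|_{\infty}\,|\Lambda_\varepsilon|^{1/2}\,\|v_\varepsilon\|_{L^2(\Omega)}\to0$ because $|\Lambda_\varepsilon|=O(\varepsilon)$ and $v_\varepsilon$ is bounded. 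Hence $\int_\Omega v_\varepsilon\psi^\varepsilon\,dx$ and $\frac{1}{|Y|}\int_{\Omega\times Y}T^\varepsilon(v_\varepsilon)T^\varepsilon(\psi^\varepsilon)\,dx\,dy$ have the same limit as $\varepsilon\to0$.

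With these in place, both directions are immediate. For $(2)\Rightarrow(1)$, the product of the weakly convergent sequence $T^\varepsilon(v_\varepsilon)\rightharpoonup v_0$ with the strongly convergent sequence $T^\varepsilon(\psi^\varepsilon)\to\psi$ converges to $\frac{1}{|Y|}\int_{\Omega\times Y}v_0\psi$, which by the second fact is the limit of $\int_\Omega v_\varepsilon\psi^\varepsilon\,dx$; this is exactly two-scale convergence. For $(1)\Rightarrow(2)$, two-scale convergence gives $\frac{1}{|Y|}\int_{\Omega\times Y}T^\varepsilon(v_\varepsilon)T^\varepsilon(\psi^\varepsilon)\,dx\,dy\to\frac{1}{|Y|}\int_{\Omega\times Y}v_0\psi$; replacing $T^\varepsilon(\psi^\varepsilon)$ by $\psi$ costs at most $\|T^\varepsilon(v_\varepsilon)\|_{L^2}\,\|\psi-T^\varepsilon(\psi^\varepsilon)\|_{L^2}\to0$ (the uniform $L^2$-bound on $T^\varepsilon(v_\varepsilon)$ coming from the norm estimate $\|T^\varepsilon(\phi)\|_{L^2}\le|Y|^{1/2}\|\phi\|_{L^2}$, and the vanishing from the first fact), so $\int_{\Omega\times Y}T^\varepsilon(v_\varepsilon)\psi\,dx\,dy\to\int_{\Omega\times Y}v_0\psi\,dx\,dy$ for every such $\psi$. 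Since these $\psi$ are dense in $L^2(\Omega\times Y)$ and $T^\varepsilon(v_\varepsilon)$ is bounded there, the weak convergence $T^\varepsilon(v_\varepsilon)\rightharpoonup v_0$ follows.

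The main obstacle is the first fact. The whole equivalence hinges on showing that the unfolded oscillating test function strips off its microscopic variable and converges to $\psi$, and this is precisely where the non-commutative structure must be reconciled with the definition of $T^\varepsilon$: it rests on the homomorphism property $\delta_{\frac{1}{\varepsilon}}(a\cdot b)=\delta_{\frac{1}{\varepsilon}}a\cdot\delta_{\frac{1}{\varepsilon}}b$ together with $Y$-periodicity, and serves as the Heisenberg replacement for the elementary Euclidean identity $T^\varepsilon(\psi^\varepsilon)(x,y)=\psi\left(\varepsilon\left[\frac{x}{\varepsilon}\right]_Y+\varepsilon y,\,y\right)$.
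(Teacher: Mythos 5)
Your proof is correct and takes essentially the same route as the paper: your ``first fact'' is exactly the paper's key identity \eqref{equalityoftwoandun} (the dilation automorphism $\delta_{\frac{1}{\varepsilon}}(\delta_\varepsilon(2k)\cdot \delta_\varepsilon y)=2k\cdot y$ lets $Y$-periodicity absorb the translation $\tau_{2k}$, and the $O(\varepsilon)$ distance estimate from Lemma \ref{linftyconv} and Proposition \ref{equ-heidis} gives $T^{\varepsilon}(\psi^{\varepsilon})\to\psi$), after which both implications follow from the weak--strong pairing as in the paper. If anything, you are more careful than the printed proof, which omits the boundary-layer estimate on $\Lambda_{\varepsilon}$, tacitly assumes a weak limit $\hat{v}_0$ exists rather than extracting it by boundedness, and leaves implicit the density argument you spell out for upgrading convergence against test functions to weak convergence in $L^{2}(\Omega\times Y)$.
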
     
 \begin{proof}
 The proof is based on the Lemma \ref{linftyconv}. For $\phi \in C_c^{\infty}(\Omega;C_{\#,\h}^{\infty}(Y))$, for $\varepsilon>0$ small enough, consider the following
\begin{align}\label{equalityoftwoandun}
 \int_{\Omega}v_\varepsilon (x)\phi\left(x,\delta_{\frac{1}{\varepsilon}}(x)\right)=\int_{\Omega \times Y} T^{\varepsilon}(v_\varepsilon)\phi \left(\delta_\varepsilon \left(2\left[\delta_{\frac{1}{\varepsilon}}x \right]_{\h}\right) \cdot \delta_\varepsilon (y),y  \right)+o(1)
.\end{align}  Let $v_{\varepsilon}$ to scale converges to $v_0$ in $\h^1$ and $T^{\varepsilon}\rightharpoonup \hat{v}_0$ weakly in $L^2(\Omega\times Y)$. 
By passing to the $\varepsilon \to 0$ on both side of \eqref{equalityoftwoandun}, we get,
$$\frac{1}{|Y|}\int_{\Omega\times Y} v_{0}(x,y)\phi(x,y)\, dxdy=\frac{1}{|Y|}\int_{\Omega\times Y}\hat{v}_0(x,y)\phi(x,y).$$
As $\phi \in C_c^{\infty}(\Omega;C_{\#,\h}^{\infty}Y)$ is arbitrary, implies $v_{0}(x,y)=\hat{v}_0(x,y)$ a.e. in $\Omega\times Y.$ 

\end{proof}

\subsection{Averaging and  adjoint operators:}

Let $u\in L^p(\Omega)$ and $v\in L^{q}(\Omega\times Y).$ Then, we compute 
\begin{align*}
\frac{1}{|Y|}\int_{\Omega\times Y} T^{\varepsilon}(u)(x,y) v(x,y)= &
  \frac{1}{|Y|}\int_{\Omega\times Y} u\left(2\delta_\varepsilon \left[\delta_{\frac{1}{\varepsilon}}x\right]_\h\cdot \delta_\varepsilon y\right) v(x,y) \, dxdy\\
  & \sum_{k\in E_{\varepsilon}} \frac{1}{|Y|}\int_{Y_k^\varepsilon}\int_{Y} u\left(2\delta_\varepsilon \left[\delta_{\frac{1}{\varepsilon}}x\right]_\h\cdot \delta_\varepsilon y\right) v(x,y) \, dxdy\\
  =&\sum_{k\in E_{\varepsilon}}\int_{Y}\int_{Y}u((\delta_{\varepsilon}(2k)\cdot \delta_\varepsilon y))v(\delta_\varepsilon(2k)\cdot \delta_\varepsilon z,y)\varepsilon^4 dzdy\\   
   \end{align*} 
   Applying the following change of variable $$x_1=\varepsilon(2k_1+y_1),\; x_2=\varepsilon (2k_2+y_2),\; x_3=\varepsilon^2 (2k_3+y_3+4(k_2 y_1-y_2k_1)),$$ we obtain
   \begin{align}\label{adjointofunfolding}
   \begin{split}
   & \frac{1}{|Y|}\int_{\Omega\times Y} T^{\varepsilon}(u)(x,y) v(x,y)
   \\&=\sum_{k\in E_\varepsilon}\frac{1}{|Y|}\int_{\delta_\varepsilon (2k\cdot Y)}\int_{Y}u(x)v(\delta_\varepsilon(2k)\cdot \delta_\varepsilon z,\{\delta_{\frac{1}{\varepsilon}}x\}_\h)\, dxdz\\
   &=\sum_{k\in E_\varepsilon}\int_{\delta_{\varepsilon}(2k\cdot Y)}u(x)\left(\frac{1}{|Y|}\int_{Y}v\left(2\delta_\varepsilon\left[\delta_{\frac{1}{\varepsilon}}x\right]_\h \cdot \delta_\varepsilon z,\{\delta_{\frac{1}{\varepsilon}}(x)\}_\h\right)dz\right)\, dx\\
    &=\int_{\Omega}u(x)\left(\frac{1}{|Y|}\int_{Y}v\left(2\delta_\varepsilon\left[\delta_{\frac{1}{\varepsilon}}x\right]_\h \cdot \delta_\varepsilon z,\{\delta_{\frac{1}{\varepsilon}}(x)\}_\h\right)dz\right)\, dx.
    \end{split}
   \end{align}
  This motivates the following definition
 \begin{defn}
For $p\in L^p(\Omega)$, the averaging operator $\mathcal{U}_\varepsilon:L^{p}(\Omega\times Y)\to L^p(\Omega)$ is defined as

$$\mathcal{U}_\varepsilon(\phi)(x)=\ds\begin{cases}\ds &\frac{1}{|Y|}\int_{Y}\phi\left(2\delta_\varepsilon\left[\delta_{\frac{1}{\varepsilon}}x\right]_\h \cdot \delta_\varepsilon z,\{\delta_{\frac{1}{\varepsilon}}(x)\}_\h\right)dz~~a.e. ~for~x\in \Omega_\varepsilon\\
&0~~~~~~~~~~~~~~~~~~~~~~~~~a.e. ~x\in \Lambda_\varepsilon 
\end{cases}$$ 
\end{defn}
 Using the above definition of $\mathcal{U}_\varepsilon$ in \eqref{adjointofunfolding}, we have; for $\psi \in L^{p}(\Omega)$ and $\phi \in L^q(\Omega \times Y)$, $$\int_{\Omega}\mathcal{U}_{\varepsilon}(\phi)(x)\psi(x)=\frac{1}{|Y|}\int_{\Omega\times Y}\phi(x,y)T^{\varepsilon}(\psi)(x,y).$$
 Hence, this implies,  the adjoint operator of $T^{\varepsilon}$ is $\mathcal{U}_\varepsilon$ in the above sense.   
 Now, we will see how unfolding operator behave with gradient.

\subsection{Unfolding of the gradient:} Throughout this article, we will denote $\n_{\h}$ and $\n_{\h,y}$,  the gradient with respect to $x$ and $y$ respectively on the Heisenberg group. Now, we will see the relation between $\n_{\h} $ and $\n_{\h,y}.$        
 Recall the horizontal vector fields
 \begin{align*}
& X_1=\frac{\p }{\p x_1}+2x_2\frac{\p }{\p x_3},\quad
~X_2=\frac{\p}{\p x_2}-2x_1\frac{\p}{\p x_3}\\
& Y_1=\frac{\p }{\p y_1}+2y_2\frac{\p }{\p y_3},\quad
~Y_2=\frac{\p}{\p y_2}-2y_1\frac{\p}{\p y_3}.
\end{align*}  
Let $\phi\in H^{1}_{\h}(\Omega)$ and let $x\in Y_k^{\varepsilon}.$ Then $$T^{\varepsilon}(\phi)(x,y)=\phi(\varepsilon(2k_1+y_1),\varepsilon(2k_2+y_2),\varepsilon^2(2k_3+y_3+4(k_2y_1-k_1y_2))=\phi(\delta_\varepsilon (2k\cdot y)).$$
By applying the horizontal vector filed $Y_1$ on $T^\varepsilon(\phi),$ we get
\begin{align*}
&Y_1(T^\varepsilon(\phi)(x,y))=\left(\frac{\p }{\p y_1}+2y_2\frac{\p }{\p y_3}\right)(\phi(\delta_\varepsilon (2k\cdot y)))\\&=\varepsilon\frac{\p \phi}{\p x_1}(\delta_\varepsilon(2k\cdot y))+2\varepsilon^{2}(2k_2+y_2)\frac{\p \phi}{\p x_3}(\delta_{\varepsilon}(2k\cdot y))\\
&=\varepsilon T^{\varepsilon}\left(\frac{\p \phi}{\p x_1} \right)+\varepsilon T^{\varepsilon}(x_2) T^{\varepsilon}\left(\frac{\p \phi}{\p x_3} \right)(x,y)       =\varepsilon T^{\varepsilon}(X_1\phi)(x,y)
\end{align*}   
Similarly, we have $Y_2(T^{\varepsilon}(\phi)(x,y))=\varepsilon T^{\varepsilon}(X_2(\phi)$. Hence using these two relations, we get $$\n_{\h,y}(T^{\varepsilon}(\phi)(x,y))=\varepsilon T^{\varepsilon}(\n_{\h}\phi)(x,y).$$ Similarly, $\text{div}_\h$ and $\text{div}_{\h,y}$ denote the divergence with respect to $x$ and $y$ respectively and have the following relation;
$$\text{div}_{\h,y}(T^{\varepsilon}(\phi)(x,y))=\varepsilon T^{\varepsilon}(\text{div}_{\h}\phi)(x,y).$$      
\begin{thm}
Let $u_\varepsilon$ be a sequence in $H_{\h}^1(\Omega)$ such that $u_{\varepsilon}\rightharpoonup u$ weakly in $H_\h^{1}(\Omega)$. Then 
There exist a unique $u_1\in L^{2}(\Omega;H^{1}_{\#,\h}(Y)/{\R})$ such that
\begin{enumerate}\label{gradientconvergence}
\item $T^{\varepsilon}(u_\varepsilon)\to u$ strongly in $L^{2}(\Omega\times Y)$\\
\item $T^{\varepsilon}(\n_{\h}u_\varepsilon)\rightharpoonup \n_\h u+\n _{\h,y}u_1$ weakly in $(L^2(\Omega\times Y))^2$.
\end{enumerate}
\end{thm}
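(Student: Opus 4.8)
The plan is to transcribe the Euclidean unfolding argument of Cioranescu--Damlamian--Griso into the Heisenberg setting, using as Heisenberg substitutes the gradient relation $\n_{\h,y}(T^{\varepsilon}\phi)=\varepsilon\,T^{\varepsilon}(\n_{\h}\phi)$, the integral identity, and Theorem \ref{div-perp}. First I would settle the strong convergence (1). Since $u_{\varepsilon}\rightharpoonup u$ in $H^1_{\h}(\Omega)$ and the embedding $H^1_{\h}(\Omega)\hookrightarrow L^2(\Omega)$ is compact (the sub-elliptic Rellich--Kondrachov theorem for H\"ormander vector fields), one has $u_{\varepsilon}\to u$ strongly in $L^2(\Omega)$. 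Writing $T^{\varepsilon}(u_{\varepsilon})-u=T^{\varepsilon}(u_{\varepsilon}-u)+\big(T^{\varepsilon}(u)-u\big)$, the first term is estimated by $\|T^{\varepsilon}(u_{\varepsilon}-u)\|_{L^2(\Omega\times Y)}\le |Y|^{1/2}\|u_{\varepsilon}-u\|_{L^2(\Omega)}\to 0$, and the second tends to $0$ by Lemma \ref{strong-fix-conv} applied to the fixed function $u$. For the gradients, boundedness of $u_{\varepsilon}$ in $H^1_{\h}(\Omega)$ makes $T^{\varepsilon}(\n_{\h}u_{\varepsilon})$ bounded in $(L^2(\Omega\times Y))^2$, so along a subsequence $T^{\varepsilon}(\n_{\h}u_{\varepsilon})\rightharpoonup\xi$ weakly, and the task is to identify $\xi=\n_{\h}u+\n_{\h,y}u_1$.

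The second step is to compute the $y$-mean of $\xi$. Testing against a $y$-independent section $\psi\in C^{\infty}_{c}(\Omega;\text{H}\h^1)$ and using $T^{\varepsilon}(\n_{\h}u_{\varepsilon})\cdot T^{\varepsilon}(\psi)=T^{\varepsilon}(\n_{\h}u_{\varepsilon}\cdot\psi)$ with the integral identity gives
$$\frac{1}{|Y|}\int_{\Omega\times Y}T^{\varepsilon}(\n_{\h}u_{\varepsilon})\cdot T^{\varepsilon}(\psi)\,dxdy=\int_{\Omega_{\varepsilon}}\n_{\h}u_{\varepsilon}\cdot\psi\,dx.$$
On the left $T^{\varepsilon}(\psi)\to\psi$ uniformly by Lemma \ref{linftyconv} while $T^{\varepsilon}(\n_{\h}u_{\varepsilon})\rightharpoonup\xi$; on the right $\n_{\h}u_{\varepsilon}\rightharpoonup\n_{\h}u$ weakly in $L^2$ and Proposition \ref{uci} discards the contribution on $\Lambda_{\varepsilon}$. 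Passing to the limit and using that $\psi$ does not depend on $y$ yields $\int_{Y}\xi(x,y)\,dy=|Y|\,\n_{\h}u(x)$, so the macroscopic part of $\xi$ is exactly $\n_{\h}u$.

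To construct $u_1$ I would introduce the corrector $\hat{u}_{\varepsilon}=\varepsilon^{-1}\big(T^{\varepsilon}(u_{\varepsilon})-\mathcal{M}_Y(T^{\varepsilon}u_{\varepsilon})\big)$, where $\mathcal{M}_Y$ is the average over $Y$ in $y$. The gradient relation gives $\n_{\h,y}\hat{u}_{\varepsilon}=T^{\varepsilon}(\n_{\h}u_{\varepsilon})$, which is bounded in $L^2$, while $\hat{u}_{\varepsilon}$ has zero $y$-mean by construction; the Heisenberg (Jerison) Poincar\'e--Wirtinger inequality on the cube $Y$ then bounds $\hat{u}_{\varepsilon}$ in $L^2(\Omega;H^1_{\h}(Y))$. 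A weak limit $\hat{u}$ satisfies $\n_{\h,y}\hat{u}=\xi$. Setting $u_1(x,y)=\hat{u}(x,y)-\big((X_1u)(x)\,y_1+(X_2u)(x)\,y_2\big)$ and observing, by a direct application of $Y_1,Y_2$, that $\n_{\h,y}\big((X_1u)y_1+(X_2u)y_2\big)=\n_{\h}u$, I obtain $\n_{\h,y}u_1=\xi-\n_{\h}u$, which is precisely statement (2) once periodicity of $u_1$ is known.

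The main obstacle is showing that $u_1(x,\cdot)$ is genuinely $Y$-periodic, i.e. $u_1\in L^2(\Omega;H^1_{\#,\h}(Y)/\R)$ rather than merely $H^1_{\h}(Y)$; note that the linear corrector $(X_1u)y_1+(X_2u)y_2$ is the non-periodic piece carrying the macroscopic gradient, exactly as $y\cdot\n u$ does in the Euclidean case. By Theorem \ref{div-perp} it suffices to prove $\int_{\Omega\times Y}(\xi-\n_{\h}u)\cdot\Phi\,dxdy=0$ for every $\Phi\in L^2(\Omega;V^{\text{div}}_{\#,\h}(Y))$ with $\text{div}_{\h,y}\Phi=0$, for then the theorem supplies a periodic potential which must coincide with $u_1$. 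Writing $T^{\varepsilon}(\n_{\h}u_{\varepsilon})=\varepsilon^{-1}\n_{\h,y}(T^{\varepsilon}u_{\varepsilon})$ and integrating by parts in $y$, the interior term vanishes because $\Phi$ is $y$-divergence free, and the whole expression reduces to the boundary flux $\int_{\partial Y}T^{\varepsilon}u_{\varepsilon}\,(\Phi\cdot n)$ over the faces of $Y$. This is the delicate point, since $T^{\varepsilon}u_{\varepsilon}$ is not $y$-periodic: the jumps across opposite faces are $O(\varepsilon)$ differences of $u_{\varepsilon}$ between adjacent Heisenberg cells, and after division by $\varepsilon$ they must be shown to match precisely the term carried by the linear corrector, using Proposition \ref{equ-heidis} to pass between the Heisenberg and Euclidean metrics together with the mean identity established above. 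Finally, uniqueness of $u_1$ follows because the map $u_1\mapsto\n_{\h,y}u_1$ is injective on $L^2(\Omega;H^1_{\#,\h}(Y)/\R)$; this also shows $\xi$ is independent of the chosen subsequence, so the full sequence converges.
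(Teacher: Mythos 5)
Your part (1) is sound and essentially the paper's argument (compact sub-elliptic embedding, the contraction bound $\|T^{\varepsilon}(u_{\varepsilon}-u)\|_{L^2(\Omega\times Y)}\le |Y|^{1/2}\|u_{\varepsilon}-u\|_{L^2(\Omega)}$, and Lemma \ref{strong-fix-conv}), and your identification of the $y$-mean of $\xi$ with $\n_{\h}u$ is correct. The gap is in the decisive step of part (2): the periodicity of $u_1$. You reduce it to showing that, after integrating $\varepsilon^{-1}\n_{\h,y}(T^{\varepsilon}u_{\varepsilon})$ by parts in $y$ against a divergence-free $\Phi$, the resulting boundary flux $\varepsilon^{-1}\int_{\partial Y}T^{\varepsilon}u_{\varepsilon}\,(\Phi\cdot n)$ over opposite faces of $Y$ converges to exactly the contribution of the linear corrector $(X_1u)\,y_1+(X_2u)\,y_2$ --- and then you stop, saying the jumps ``must be shown to match.'' That is precisely the hard point, not a routine verification: $T^{\varepsilon}u_{\varepsilon}(x,\cdot)$ has nontrivial traces across opposite faces because adjacent Heisenberg cells are glued by the twisted translation $\tau_{2k}$, so the jump in the $y_3$-direction involves the commutator terms $4(k_2y_1-k_1y_2)$ and does not reduce to a plain difference quotient of $u_{\varepsilon}$ as in the Euclidean case; moreover the jumps converge only weakly, so ``$O(\varepsilon)$ differences divided by $\varepsilon$'' is an estimate, not an identification of the limit. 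There is also a structural problem: your $\Phi$ is a general element of $L^2(\Omega;V^{\emph{div}}_{\#,\h}(Y))$, merely measurable in $x$, so it cannot be folded back into an oscillating test function at the $\varepsilon$ level, and the per-cell flux bookkeeping you describe is not even well posed for such $\Phi$ without first restricting to smooth fields and invoking density --- which you never do.

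The paper avoids this trap entirely, and your own appeal to Theorem \ref{div-perp} at the end shows the corrector detour is redundant. The paper tests the equation at the $\varepsilon$ level with smooth oscillating fields $\psi\left(x,\delta_{\frac{1}{\varepsilon}}x\right)$, $\psi\in (C_c^{\infty}(\Omega;C^{\infty}_{\#,\h}(Y)))^2$ with $\emph{div}_{\h,y}\psi=0$: since $\psi$ has compact support in $x$, the integration by parts happens in $x$ over $\Omega$ with no boundary terms, the divergence-free condition in $y$ kills the singular $\varepsilon^{-1}$ term, and unfolding plus the limit gives $\int_{\Omega\times Y}(\xi_0-\n_{\h}u)\cdot\psi=0$ directly; density of such $\psi$ in the divergence-free part of $L^2(\Omega;V^{\emph{div}}_{\#,\h}(Y))$ and Theorem \ref{div-perp} then produce a genuinely periodic $u_1$ with $\n_{\h,y}u_1=\xi_0-\n_{\h}u$, with the upgrade $u_1\in L^2(\Omega;H^1_{\#,\h}(Y)/\R)$ for free since $\xi_0,\n_{\h}u\in (L^2(\Omega\times Y))^2$. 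To repair your write-up with minimal change, delete the corrector construction and the flux argument, and prove the orthogonality relation the paper's way; alternatively, if you want the CDG-style corrector route, you must actually carry out the trace-matching across cell faces in the Heisenberg group (tracking the non-commutative gluing), which is a substantial missing argument, not a remark.
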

\begin{proof}
First, we will show that the limit of $T^{\varepsilon}(u_\varepsilon)$ is independent of $y$. Let $T^{\varepsilon}(u_\varepsilon)\rightharpoonup \hat{u}$ weakly $L^2(\Omega\times Y)$ and we need to show that $\hat{u}(x,y)=\hat{u}(x)$. To see this, for $\psi\in C_{c}^{\infty}(\Omega\times Y)),$ consider the following
\begin{align}\label{indipendentofy}
\begin{split}
&\int_{\Omega\times Y}\varepsilon T^{\varepsilon}(\n_{\h}u_{\varepsilon})\psi(x,y)\, dxdy\\ &=\int_{\Omega\times Y} \n_{\h,y}(T^{\varepsilon}(u_{\varepsilon})(x,y))\psi(x,y)=-\int_{\Omega\times Y}T^{\varepsilon}(u_\varepsilon)(x,y)\text{div}_{\h,y}\psi(x,y) 
\end{split}
\end{align} 
As $T^{\varepsilon}(\n_{\h} u_{\varepsilon})$ is bounded in $ L^{2}(\Omega\times Y),$ we have $\int_{\Omega\times Y} \varepsilon T^{\varepsilon}(\n_{\h}u_{\varepsilon})\psi(x,y)\to 0$ as $\varepsilon \to 0.$ By letting $\varepsilon \to 0$ in \eqref{indipendentofy} to get
 $$\int_{\Omega\times Y}\hat{u}(x,y)\text{div}_{\h,y}\psi(x,y)=0$$ for all $\psi\in C_{c}^{\infty}(\Omega\times Y).$ Hence $\n_{\h,y}\hat{u}(x,y)=0,$ implies that $\hat{u}$ is independent of $y.$ \\
 
 \noindent On the other hand by weak convergence of $u_{\varepsilon}$, we have $u=M_{Y}(\hat{u})=\hat{u}$, where $M_Y(\hat{u})=\frac{1}{|Y|}\int_{Y}\hat{u}(x,y)dy$. So, we have the weak convergence of $T^{\varepsilon}(u_{\varepsilon})\rightharpoonup u$ in $L^{2}(\Omega \times Y)$. Now for the norm convergence, consider
 \begin{align*}
 \int_{\Omega\times Y}(T^{\varepsilon}(u_{\varepsilon})-T^{\varepsilon}(u))^2\leq \int_{\Omega}(u_{\varepsilon}-u)^2\to 0~as~\varepsilon \to 0.
 \end{align*}
 We know  from Lemma \ref{strong-fix-conv}, that $\|T^{\varepsilon}u\|_{L^2(\Omega\times Y)}\to \|u\|_{L^{2}(\Omega\times Y)}.$ Thus, we have $$\|T^{\varepsilon}(u_{\varepsilon})\|_{L^2(\Omega\times Y)}\to \|u\|_{L^2(\Omega\times Y)}.$$ Hence weak convergence with norm convergence implies the strong convergence. This proves $(1)$ of Theorem \ref{gradientconvergence}. 
 \par
\noindent For the second part, we will use the  test  function of the form $ \psi_{\varepsilon}(x)=\psi\left(x,\delta_{\frac{1}{\varepsilon}}(x)\right)$ for $\psi\in (C_{c}^{\infty}(\Omega,C^{\infty}_{\#,\h}( Y)))^2$ with $\text{div}_{\h,y}\psi=0.$ Let us consider the following,
\begin{align*}
\int_{\Omega} \n_{\h}u_{\varepsilon} \psi\left(x,\delta_{\frac{1}{\varepsilon}}(x)\right)=\int_{\Omega \times Y} T^{\varepsilon}(\n_{\h}u_{\varepsilon})(x,y)\psi\left(\delta_{\varepsilon}\left(2\left[ \delta_{\frac{1}{\varepsilon}}(x)\right]_{\h}\cdot y\right),y\right).
\end{align*}
Let $T^{\varepsilon}(\n_\h u_\varepsilon)\rightharpoonup \xi_0$ weakly in $(L^2(\Omega\times Y))^2$. Now, using integration by parts and  the gradient relation between $\n_{\h}$ and $\n_{\h,y},$ we get
 
\begin{align*}
&\int_{\Omega \times Y} T^{\varepsilon}(\n_{\h}u_{\varepsilon})(x,y)\psi\left(\delta_{\varepsilon}\left(2\left[ \delta_{\frac{1}{\varepsilon}}(x)\right]_{\h}\cdot y\right),y\right)\\ 
&=\int_{\Omega \times Y}\frac{1}{\varepsilon}\n_{\h,y} T^{\varepsilon}(u_{\varepsilon})(x,y)\psi\left(\delta_{\varepsilon}\left(2\left[ \delta_{\frac{1}{\varepsilon}}(x)\right]_{\h}\cdot y\right),y\right)\\
&=-\frac{1}{|Y|}\int_{\Omega \times Y} T^{\varepsilon}(u_{\varepsilon})(x,y)\left[ \text{div}_\h\psi\left(\delta_{\varepsilon}\left(2\left[ \delta_{\frac{1}{\varepsilon}}(x)\right]_{\h}\cdot y\right),y\right)\right]\\
&=-\frac{1}{|Y|}\int_{\Omega \times Y} T^{\varepsilon}(u_{\varepsilon})(x,y)T^{\varepsilon}(\text{div}_{\h}\psi)(x,y)
\end{align*}
By passing to the limit on the both sides, we get
\[\int_{\Omega\times Y} \xi_{0}(x,y) \psi (x,y)=\int_{\Omega\times Y}u(x)\text{div}_\h\psi(x,y).\]
Thus, we have
 $$ \int_{\Omega\times Y} (\xi_0 -\n_{\h} u(x))\psi(x,y)\,dx dy=0.$$  
 From the convergence of the unfolding sequence, we have $\xi_{0},\nabla_{\h}u(x)\in (L^{2}(\Omega\times Y))^2$. Hence $(\xi_{0}(x,y)-\n_{\h}u(x))\in L^{2}\left(\Omega;\left(V_{\#,\h}^{\text{div}}(Y)\right)^*\right)$. Also $(C_{c}^{\infty}(\Omega;C_{\#,\h}^{\infty}(Y)))^2$ is dense in $L^{2}\left(\Omega;\left(V_{\#,\h}^{\text{div}}(Y)\right)\right),$ this implies that $$\int_{\Omega\times Y} (\xi_0 -\n_{\h} u(x))\psi(x,y)\,dx dy=0,~~~\text{for all } \psi \in L^{2}\left(\Omega;\left(V_{\#,\h}^{\text{div}}(Y)\right)\right)~\text{with } \text{div}_{\h}\psi=0.
$$ Hence, $(\xi_0-\n_{\h}u)$ is perpendicular to the divergence free vector field. We get from  Theorem \ref{div-perp} that there exists a unique $u_1\in L^2(\Omega, L^{2}_{\#,\h}(Y)/\R)$ such that $$\xi_0-\n_{\h}u=\n_{\h,y}u_{1}.$$ Since, $\xi_0 $ and $\n_\h u $ are in $(L^{2}(\Omega\times Y))^2$, we see that  $u_{1}\in L^{2}(\Omega;H^{1}_{\#,\h}(Y)/\R).$ Hence, we have the second convergence. 
 \end{proof}

The unfolding $T^{\varepsilon}$  exhibits  more nice properties which will be useful in applications.
\begin{prop} 
Let $u_{\varepsilon}$ be a bounded sequence in $L^{p}(\Omega)$ with $p\in (1,\infty)$ satisfying 
\begin{align*}
\varepsilon\left\|  X_i u_{\varepsilon} \right \|_{L^{p}(\Omega)}\leq C~~\text{ for } i=1,2.
\end{align*}
Then there exists a subsequence and $\hat{u}\in L^p(\Omega)$ with $Y_i\hat{u}\in L^{p}(\Omega\times Y)$ such that 
\begin{align}
\begin{split}
& (i)~T^{\varepsilon}(u_{\varepsilon})\rightharpoonup \hat{u} ~\text{ weakly in } L^{p}(\Omega\times Y )\\
&(ii)~\varepsilon T^{\varepsilon}(X_iu_{\varepsilon})=Y_iT^{\varepsilon}(u_{\varepsilon})\rightharpoonup Y_i \hat{u}~~\text{ weakly in }~ L^{p}(\Omega\times Y)~~\text{for } i=1,2.
\end{split}
\end{align} 
\end{prop}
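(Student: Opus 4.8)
The plan is to combine the $L^p$-continuity of $T^{\varepsilon}$ with the commutation relation $Y_iT^{\varepsilon}(\phi)=\varepsilon T^{\varepsilon}(X_i\phi)$ established in the subsection on unfolding of the gradient, and then to extract weak limits using the reflexivity of $L^p(\Omega\times Y)$ for $p\in(1,\infty)$. First I would record that, by the estimate $\|T^{\varepsilon}(\phi)\|_{L^p(\Omega\times Y)}\leq |Y|^{1/p}\|\phi\|_{L^p(\Omega)}$, the boundedness of $u_{\varepsilon}$ in $L^p(\Omega)$ forces $T^{\varepsilon}(u_{\varepsilon})$ to be bounded in $L^p(\Omega\times Y)$. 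Reflexivity then yields a subsequence along which $T^{\varepsilon}(u_{\varepsilon})\rightharpoonup \hat{u}$ weakly in $L^p(\Omega\times Y)$, which is assertion (i).

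Next I would bound the unfolded horizontal derivatives. Applying the same continuity estimate to $X_iu_{\varepsilon}$, together with the commutation relation and the hypothesis $\varepsilon\|X_iu_{\varepsilon}\|_{L^p(\Omega)}\leq C$, one obtains
\[
\|Y_iT^{\varepsilon}(u_{\varepsilon})\|_{L^p(\Omega\times Y)}=\varepsilon\,\|T^{\varepsilon}(X_iu_{\varepsilon})\|_{L^p(\Omega\times Y)}\leq |Y|^{1/p}\,\varepsilon\,\|X_iu_{\varepsilon}\|_{L^p(\Omega)}\leq |Y|^{1/p}C.
\]
Thus $Y_iT^{\varepsilon}(u_{\varepsilon})$ is bounded in $L^p(\Omega\times Y)$, and after passing to a further subsequence (and relabelling over $i=1,2$) I obtain $Y_iT^{\varepsilon}(u_{\varepsilon})\rightharpoonup \xi_i$ weakly in $L^p(\Omega\times Y)$.

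The decisive step is to identify $\xi_i=Y_i\hat{u}$. The key structural fact is that the horizontal fields $Y_1=\partial_{y_1}+2y_2\partial_{y_3}$ and $Y_2=\partial_{y_2}-2y_1\partial_{y_3}$ are divergence-free in $y$, hence formally skew-adjoint: for every $\psi\in C_c^{\infty}(\Omega\times Y)$ one has $\int_{\Omega\times Y}Y_i(f)\psi=-\int_{\Omega\times Y}f\,Y_i(\psi)$, with no boundary contribution since $\psi$ is compactly supported. Testing both weak limits against such $\psi$ and integrating by parts gives
\[
\int_{\Omega\times Y}\xi_i\,\psi=\lim_{\varepsilon\to 0}\int_{\Omega\times Y}Y_iT^{\varepsilon}(u_{\varepsilon})\,\psi=-\lim_{\varepsilon\to 0}\int_{\Omega\times Y}T^{\varepsilon}(u_{\varepsilon})\,Y_i\psi=-\int_{\Omega\times Y}\hat{u}\,Y_i\psi,
\]
where the two limit passages are justified because each integrand pairs a weakly $L^p$-convergent sequence against a fixed $L^q$ test function. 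This is precisely the distributional identity $Y_i\hat{u}=\xi_i$, and since $\xi_i\in L^p(\Omega\times Y)$ we conclude $Y_i\hat{u}\in L^p(\Omega\times Y)$ together with the convergence $\varepsilon T^{\varepsilon}(X_iu_{\varepsilon})=Y_iT^{\varepsilon}(u_{\varepsilon})\rightharpoonup Y_i\hat{u}$, which is (ii).

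I expect the only genuinely delicate point to be this last identification, namely verifying that the distributional horizontal $y$-derivative is closed under weak $L^p$-convergence; everything here rests on the skew-adjointness of $Y_1,Y_2$ in the $y$-variable, which must be checked explicitly from their divergence-free coefficients. The remaining ingredients, namely the continuity estimate, the commutation relation, and the extraction of weakly convergent subsequences, are routine, with only the bookkeeping of the nested subsequences over $i=1,2$ requiring some care.
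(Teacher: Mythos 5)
Your proof is correct and follows essentially the same route as the paper: boundedness of the unfolded sequences via the continuity estimate, weak compactness in the reflexive space $L^p(\Omega\times Y)$, the commutation relation $Y_iT^{\varepsilon}(u_{\varepsilon})=\varepsilon T^{\varepsilon}(X_iu_{\varepsilon})$, and integration by parts in $y$ against $\psi\in C_c^{\infty}(\Omega\times Y)$ to identify the limit as $Y_i\hat{u}$. If anything, you are slightly more careful than the paper, which leaves implicit both the $L^p$-bound on $\varepsilon T^{\varepsilon}(X_iu_{\varepsilon})$ needed to upgrade the distributional identity to weak $L^p$-convergence and the skew-adjointness of $Y_1,Y_2$ (their divergence-free coefficients) that justifies the integration by parts.
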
 
 \begin{proof}
 As $u_{\varepsilon}$ is a bounded sequence in $L^{p}(\Omega)$ by properties of unfolding operator, we have $T^{\varepsilon}(u_{\varepsilon})$ is a bounded sequence in $L^{p}(\Omega\times Y).$ Hence by weak compactness, there exists $\hat{u}\in L^{p}(\Omega\times Y)$ such that $$
T^{\varepsilon}(u_{\varepsilon})\rightharpoonup \hat{u}~~\text{ weakly in } L^{p}(\Omega \times Y).$$
Let $\phi \in C_{c}^{\infty}(\Omega\times Y),$ consider
\begin{align*}
\int_{\Omega\times Y} \varepsilon T^{\varepsilon}(X_i u_{\varepsilon})\phi =\int_{\Omega\times Y} Y_iT^{\varepsilon}(u_\varepsilon) \phi=-\int_{\Omega\times Y}T^{\varepsilon}(u_{\varepsilon}) Y_i\phi.
\end{align*} 
Using weak convergence of $T^{\varepsilon}(u_{\varepsilon})$, we pass to the limit as $\varepsilon \to 0$ to get
 $$\lim_{\varepsilon \to 0}\int_{\Omega\times Y} \varepsilon T^{\varepsilon}(X_i u_{\varepsilon})\phi =-\int_{\Omega\times Y}\hat{u} Y_i\phi.$$
 Above equality implies the second part of the proposition.
 \end{proof}
\noindent The above proposition can be written in following form
   \begin{prop} 
Let $u_{\varepsilon}$ be a bounded sequence in $L^{p}(\Omega)$ with $p\in (1,\infty)$ satisfying 
\begin{align*}
\varepsilon\left\| \n_{\h} u_{\varepsilon} \right \|_{L^{p}(\Omega)}\leq C.
\end{align*}
Then, there exist a subsequence and $\hat{u}\in L^p(\Omega;H^1_{\h}(Y))$  such that 
\begin{align}
\begin{split}
& T^{\varepsilon}(u_{\varepsilon})\rightharpoonup \hat{u} ~\text{ weakly in } L^{p}(\Omega; H^1_{\h}(Y)) )\\
&\varepsilon T^{\varepsilon}(\n_{\h}u_{\varepsilon})=\n_{\h, y}T^{\varepsilon}(u_{\varepsilon})\rightharpoonup \n_{\h,y} \hat{u}~~\text{ weakly in }~ (L^{p}(\Omega\times Y))^2.
\end{split}
\end{align} 
\end{prop}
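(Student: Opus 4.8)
The plan is to read off this vector-valued statement from the preceding scalar proposition applied to each horizontal derivative separately. Since $\n_{\h} u_{\varepsilon} = (X_1 u_{\varepsilon}, X_2 u_{\varepsilon})$, the pointwise bound $|X_i u_{\varepsilon}| \leq |\n_{\h} u_{\varepsilon}|$ turns the hypothesis $\varepsilon \|\n_{\h} u_{\varepsilon}\|_{L^p(\Omega)} \leq C$ into $\varepsilon \|X_i u_{\varepsilon}\|_{L^p(\Omega)} \leq C$ for $i = 1, 2$. First I would invoke the preceding proposition, which already supplies a single subsequence and a single limit $\hat{u}$ with $Y_i \hat{u} \in L^p(\Omega \times Y)$ such that $T^{\varepsilon}(u_{\varepsilon}) \rightharpoonup \hat{u}$ weakly in $L^p(\Omega \times Y)$ and $\varepsilon T^{\varepsilon}(X_i u_{\varepsilon}) = Y_i T^{\varepsilon}(u_{\varepsilon}) \rightharpoonup Y_i \hat{u}$ weakly in $L^p(\Omega \times Y)$ for both $i = 1, 2$.

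Next I would assemble these two scalar facts into the claimed vector form. Having $Y_1 \hat{u}$ and $Y_2 \hat{u}$ in $L^p(\Omega \times Y)$ says precisely that for a.e. $x \in \Omega$ the slice $\hat{u}(x, \cdot)$ belongs to $H^1_{\h}(Y)$ with $\n_{\h,y} \hat{u} = (Y_1 \hat{u}, Y_2 \hat{u})$, so $\hat{u} \in L^p(\Omega; H^1_{\h}(Y))$. Stacking the two component convergences and using the identity $\n_{\h,y}(T^{\varepsilon}(\phi)) = \varepsilon T^{\varepsilon}(\n_{\h} \phi)$ established earlier then gives $\varepsilon T^{\varepsilon}(\n_{\h} u_{\varepsilon}) = \n_{\h,y} T^{\varepsilon}(u_{\varepsilon}) \rightharpoonup \n_{\h,y} \hat{u}$ weakly in $(L^p(\Omega \times Y))^2$, which is the second assertion. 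For the first assertion, I would recall that for $1 < p < \infty$ the map $w \mapsto (w, Y_1 w, Y_2 w)$ isometrically embeds $L^p(\Omega; H^1_{\h}(Y))$ into $(L^p(\Omega \times Y))^3$; since all three components of $T^{\varepsilon}(u_{\varepsilon})$ converge weakly in $L^p(\Omega \times Y)$ to the corresponding components of $\hat{u}$, the sequence converges weakly to $\hat{u}$ in $L^p(\Omega; H^1_{\h}(Y))$.

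The only step needing a little care --- and the main, though mild, obstacle --- is justifying that the weak limit genuinely carries the derivative, i.e. that the weak $L^p(\Omega \times Y)$ limit of $Y_i T^{\varepsilon}(u_{\varepsilon})$ is the distributional derivative $Y_i \hat{u}$ of the limit rather than some unrelated function. This is exactly the identification already carried out in the preceding proposition: one tests against $\phi \in C_c^{\infty}(\Omega \times Y)$, integrates by parts in $y$ to transfer $Y_i$ onto $\phi$, and passes to the limit using only the weak convergence $T^{\varepsilon}(u_{\varepsilon}) \rightharpoonup \hat{u}$, the compact support of $\phi$ killing the boundary contributions. Hence no new estimate is required, and the present proposition is a direct reformulation of the scalar one.
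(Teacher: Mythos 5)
Your proposal is correct and matches the paper exactly: the paper offers no separate proof for this statement, introducing it with ``The above proposition can be written in following form,'' i.e.\ it is obtained precisely by applying the scalar proposition componentwise to $X_1u_{\varepsilon}$ and $X_2u_{\varepsilon}$ and reassembling, as you do. Your additional remarks (deriving the component bounds from $\varepsilon\|\n_{\h}u_{\varepsilon}\|_{L^p(\Omega)}\leq C$, identifying the distributional derivative via the integration by parts already done in the scalar proof, and upgrading to weak convergence in $L^p(\Omega;H^1_{\h}(Y))$ via the embedding $w\mapsto(w,Y_1w,Y_2w)$) simply make explicit what the paper leaves implicit.
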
 
\noindent Now, let $u_{\varepsilon}$ be a sequence in $H^{1}_\h(\Omega)$ weakly converges to $u$ in $H^{1}_\h(\Omega).$ Then, by compact embedding $u_{\varepsilon}\to u$ strongly in $L^{2}(\Omega).$ By properties of unfolding operator, we have $T^{\varepsilon}(u_{\varepsilon})\to u$ strongly in $L^{2}(\Omega\times Y).$  Thus, we have the following proposition,
\begin{prop}
let $u_{\varepsilon}$ be a sequence in $H^{1}_\h(\Omega)$ weakly converges to $u$ in $H^{1}_\h(\Omega).$ Then $$T^{\varepsilon}(u_\varepsilon)\to u~~\text{weakly in } L^2(\Omega;H^1_\h(Y)),\text{ and strongly in } L^2(\Omega\times Y).$$
\end{prop}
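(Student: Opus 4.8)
The plan is to prove the two assertions in sequence: I would first obtain the strong convergence in $L^2(\Omega\times Y)$ and then upgrade this to weak convergence in the finer space $L^2(\Omega;H^1_\h(Y))$. Both parts are a repackaging of the estimates for $T^\varepsilon$ already established, combined with the compact Sobolev embedding $H^1_\h(\Omega)\hookrightarrow L^2(\Omega)$. The only genuinely delicate point, discussed at the end, is the identification of the weak $L^2(\Omega;H^1_\h(Y))$-limit with $u$.

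For the strong convergence, since $u_\varepsilon\rightharpoonup u$ weakly in $H^1_\h(\Omega)$ the compact embedding yields $u_\varepsilon\to u$ strongly in $L^2(\Omega)$. Applying the contraction estimate $\|T^\varepsilon(\phi)\|_{L^2(\Omega\times Y)}\le |Y|^{1/2}\|\phi\|_{L^2(\Omega)}$ to $\phi=u_\varepsilon-u$ and using the linearity of $T^\varepsilon$ gives
\[
\|T^\varepsilon(u_\varepsilon)-T^\varepsilon(u)\|_{L^2(\Omega\times Y)}\le |Y|^{1/2}\|u_\varepsilon-u\|_{L^2(\Omega)}\to 0 .
\]
Since Lemma \ref{strong-fix-conv} gives $T^\varepsilon(u)\to u$ strongly in $L^2(\Omega\times Y)$, the triangle inequality delivers $T^\varepsilon(u_\varepsilon)\to u$ strongly in $L^2(\Omega\times Y)$ for the full sequence.

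To promote this to weak convergence in $L^2(\Omega;H^1_\h(Y))$, I would bound $T^\varepsilon(u_\varepsilon)$ in that space. Weak convergence of $u_\varepsilon$ in $H^1_\h(\Omega)$ controls both $\|u_\varepsilon\|_{L^2(\Omega)}$ and $\|\n_\h u_\varepsilon\|_{L^2(\Omega)}$; hence $T^\varepsilon(u_\varepsilon)$ is bounded in $L^2(\Omega\times Y)$, while the gradient identity $\n_{\h,y}T^\varepsilon(u_\varepsilon)=\varepsilon T^\varepsilon(\n_\h u_\varepsilon)$ yields
\[
\|\n_{\h,y}T^\varepsilon(u_\varepsilon)\|_{L^2(\Omega\times Y)}=\varepsilon\,\|T^\varepsilon(\n_\h u_\varepsilon)\|_{L^2(\Omega\times Y)}\le \varepsilon\,|Y|^{1/2}\|\n_\h u_\varepsilon\|_{L^2(\Omega)}\to 0 .
\]
Thus $T^\varepsilon(u_\varepsilon)$ is bounded in the Hilbert space $L^2(\Omega;H^1_\h(Y))$, and by reflexivity some subsequence converges weakly there.

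The main point requiring care is identifying this weak limit. Viewed merely in $L^2(\Omega\times Y)$, the weak $L^2(\Omega;H^1_\h(Y))$-limit of the subsequence must coincide with the strong limit $u$ obtained above. Since $u=u(x)$ is independent of $y$, its $y$-gradient vanishes, so $\n_{\h,y}u=0$ and indeed $u\in L^2(\Omega;H^1_\h(Y))$; consequently the weak limit is $u$. Because the limit is uniquely determined, a standard subsequence argument shows the full sequence converges weakly in $L^2(\Omega;H^1_\h(Y))$, which together with the strong $L^2(\Omega\times Y)$ convergence completes the proof.
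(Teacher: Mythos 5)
Your proposal is correct and takes essentially the same approach as the paper: the paper likewise obtains $u_\varepsilon\to u$ strongly in $L^2(\Omega)$ from the compact embedding, deduces $T^{\varepsilon}(u_\varepsilon)\to u$ strongly in $L^2(\Omega\times Y)$ from the contraction property of $T^{\varepsilon}$ together with Lemma \ref{strong-fix-conv}, and gets the weak convergence in $L^2(\Omega;H^1_{\h}(Y))$ from the preceding proposition, whose proof rests on the same identity $\nabla_{\h,y}T^{\varepsilon}(u_\varepsilon)=\varepsilon T^{\varepsilon}(\nabla_{\h}u_\varepsilon)$ you invoke. Your explicit identification of the weak limit and the full-sequence argument merely spell out steps the paper leaves implicit.
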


\section{Homogenization via periodic unfolding operator}\label{Homgenization}
In this section, we study the homogenization of a standard oscillation problem in Heisenberg group.  We are under investigation of the applicability of the introduced unfolding operator  in particular to optimal control problems. At this stage we would like to recall that the unfolding operator can be used to characterize the optimal control in homogenization problem (see \cite{AAN19,ARB,AN21}).  

 Let $A=[a_{i,j}]_{i,j=1}^2:\h^1\to M_{2\times 2}(\R)$ be a  matrix valued function with the following properties:  
 \begin{enumerate}
 \item The coefficients  $a_{i,j}:\h^1\to \R$ are Heisenberg $Y$-periodic for all $i,j=1,2,$  bounded and measurable functions.
 \item The matrix $A$ is uniformly elliptic and bounded, that is there exist $\alpha$ and $\beta$ such that following two condition hold
 \begin{enumerate}
 \item $\|A(x)v\|\leq \beta v  $ for all $v\in 
 \R^2$ and for all $x\in\h^1.$ Since $a_{i,j}$ for $i,j=1,2$ are $Y$-periodic, it is sufficient to hold for $x\in Y.$
 \item For all $x\in \h^1$ or $x\in Y$ and $v\in \R^2,$ $A$ satisfies $$\langle A(x)v,v\rangle>\alpha\|v\|^2.$$
 \end{enumerate}
 \end{enumerate}
For each $\varepsilon>0,$ denote $A^{\varepsilon}(x)=A\left( \delta_{\frac{1}\varepsilon}(x) \right).$ The map $x\to A^{\varepsilon}(x)$ can be realized as a moving frame with a section of the vector bundle of symmetric linear endomorphisms of the horizontal fibers.     
As an application of the unfolding operator on the Heisenberg group, we will consider the following homogenization problem: for $f\in L^{2}(\Omega)$, consider
\begin{align}\label{mpde}
\begin{split}
&-\text{div}_{\h}(A^{\varepsilon}\n u_{\varepsilon})+u_{\varepsilon}=f~~\text{in}~ ~~\Omega\\
& \hspace{6mm}   A^{\varepsilon}(x)\n u_{\varepsilon}\cdot n_{\h}(x)=0~~~\text{on}~~ \p\Omega.
\end{split}
\end{align} 
Here $n_\h=C(x)\nu$ where $\nu$ is the Euclidean outward normal on $\partial \Omega.$   
More precisely, we are considering the following  variational problem: find $u_{\varepsilon}\in H^{1}_{\h}(\Omega)$ such that 
\begin{align}\label{varform}
\int_{\Omega}A^{\varepsilon}\n_{\h}u_{\varepsilon}\cdot \nabla_{\h}\phi \, dx+\int_{\Omega}u_{\varepsilon}\phi \, dx=\int_{\Omega}f\phi\, dx, ~~~\text{for all}~\phi\in  H^{1}(\Omega).
\end{align} 
For  every $\varepsilon>0$, Lax-Milgram theorem guaranties  of the  unique solution $u_{\varepsilon}.$ By taking $u_{\varepsilon}$ as a test function on both side of \eqref{varform}, we get $\|u_{\varepsilon}\|_{H^{1}_{\h}(\Omega)}\leq \frac{1}{\alpha} \|f\|_{L^2(\Omega)},$ where $\alpha$ is the elliptic constant constant.
Our goal is to analyze the asymptotic behavior of the sequence of solution $u_{
\varepsilon}$ as the periodic parameter $\varepsilon \to 0.$ 
The present problem is not new and it can be studied via two-scale convergence also. But our aim in this article is to introduce unfolding operator and through this standard example, we are exhibiting the easy way of studying the problem using unfolding operator. We would like to study more non-trivial problem like optimal control problems. 

 The limiting behavior of the  sequence of solution to $u_{\varepsilon}$ is summed up in the following theorem. 
\begin{thm}
Let $u_{\varepsilon}$ be the sequence of solution to \eqref{mpde}. Then 
\begin{align*}
&T^{\varepsilon}(u_\varepsilon)\to u ~\text{ strongly in }  L^2(\Omega\times Y)\\
&T^{\varepsilon}(\n_{\h}u_{\varepsilon})\rightharpoonup \n_{\h}u+\n_{\h,y} u_1  \text{ weakly in }( L^{2}(\Omega\times Y))^2.
\end{align*}
where $u\in H_\h^{1}(\Omega)$ is independent of $y$, and  $(u,u_1)\in H^{1}_{\h}(\Omega)\times L^{2}(\Omega; H^{1}_{\#,\h}(Y)/\R)$ satisfies the following variational system

\begin{align}\label{limitvarform}
\begin{split}
&\int_{\Omega\times Y}A(x,y)(\n_{\h}u(x)+\n_{\h,y}u_1(x,y))\cdot (\n_\h\phi(x)+\n_{\h,y}\phi_1(x,y))\, dxdy\\
&\hspace{3cm}+\int_{\Omega\times}u(x)\phi(x)\, dxdy=\int_{\Omega\times Y}f(x)\phi(x)\, dxdy
\end{split}
\end{align} 
for all $(\phi,\phi_1)\in H^{1}_\h (\Omega)\times L^{2}(\Omega; H^{1}_{\#,\h}(Y)/\R).$ 
\end{thm}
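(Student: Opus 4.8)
**

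The plan is to combine the compactness results already established for the unfolding operator with the standard energy method, adapted to the Heisenberg setting. First I would record the a priori bound: testing the variational form \eqref{varform} with $\phi=u_\varepsilon$ and using ellipticity gives $\|u_\varepsilon\|_{H^1_\h(\Omega)}\le \frac{1}{\alpha}\|f\|_{L^2(\Omega)}$, so $u_\varepsilon\rightharpoonup u$ weakly in $H^1_\h(\Omega)$ along a subsequence. Theorem \ref{gradientconvergence} then supplies, for free, the two convergences in the statement: $T^\varepsilon(u_\varepsilon)\to u$ strongly in $L^2(\Omega\times Y)$ and $T^\varepsilon(\n_\h u_\varepsilon)\rightharpoonup \n_\h u+\n_{\h,y}u_1$ weakly in $(L^2(\Omega\times Y))^2$ for a unique $u_1\in L^2(\Omega;H^1_{\#,\h}(Y)/\R)$. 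So the only real work is to identify the limit equation \eqref{limitvarform}.

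To pass to the limit I would use oscillating test functions of the form $\phi(x)+\varepsilon\,\psi_1\!\left(x,\delta_{1/\varepsilon}(x)\right)$ with $\phi\in C_c^\infty(\Omega)$ and $\psi_1\in C_c^\infty(\Omega;C^\infty_{\#,\h}(Y))$; the $\varepsilon$-prefactor on the corrector is the standard device so that $\n_\h$ of the corrector term produces $\n_{\h,y}\psi_1$ at leading order via the gradient relation $\n_{\h,y}(T^\varepsilon\cdot)=\varepsilon T^\varepsilon(\n_\h\cdot)$ derived in the excerpt. Substituting into \eqref{varform}, I would apply the unfolding integral identity (Proposition on the $L^1$ equality, with the convention from the Remark following Proposition \ref{uci}) to rewrite every integral over $\Omega$ as an integral over $\Omega\times Y$ of unfolded quantities. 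The diffusion term becomes
\begin{align*}
\frac{1}{|Y|}\int_{\Omega\times Y} T^\varepsilon(A^\varepsilon)\,T^\varepsilon(\n_\h u_\varepsilon)\cdot\Big(T^\varepsilon(\n_\h\phi)+\n_{\h,y}\psi_1+\varepsilon\,T^\varepsilon(\n_\h\psi_1)\Big)\,dxdy,
\end{align*}
where I use that $T^\varepsilon(A^\varepsilon)(x,y)=A(y)$ exactly, because the definition of $T^\varepsilon$ keeps a $Y$-periodic function unchanged and $A^\varepsilon(x)=A(\delta_{1/\varepsilon}x)$. The key convergences are then: $T^\varepsilon(\n_\h\phi)\to\n_\h\phi$ and $T^\varepsilon(\n_\h\psi_1)$ bounded (so its $\varepsilon$-multiple vanishes) by Lemma \ref{linftyconv}, while $T^\varepsilon(\n_\h u_\varepsilon)$ converges weakly to $\n_\h u+\n_{\h,y}u_1$. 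The product of a weakly convergent sequence against a strongly convergent test factor passes to the limit, yielding \eqref{limitvarform} for this class of test pairs.

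The main obstacle, and the step deserving the most care, is the weak–strong pairing in the diffusion term: $T^\varepsilon(\n_\h u_\varepsilon)$ converges only weakly, so I must ensure the factor it is paired against converges strongly in $L^2(\Omega\times Y)$. This is why the $A(y)$ identity matters — it is a fixed $L^\infty$ function rather than an oscillating one, so multiplication by it is weakly continuous, and the remaining test factor $T^\varepsilon(\n_\h\phi)+\n_{\h,y}\psi_1+\varepsilon T^\varepsilon(\n_\h\psi_1)$ converges strongly to $\n_\h\phi+\n_{\h,y}\psi_1$ by the lemmas cited. I would also verify the lower-order and right-hand-side terms using $T^\varepsilon(u_\varepsilon)\to u$ strongly and $T^\varepsilon(f)\to f$, $T^\varepsilon(\phi)\to\phi$ from Lemma \ref{strong-fix-conv}, noting the corrector contributes nothing to these since it carries an explicit $\varepsilon$.

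Finally I would close the argument by density and well-posedness. The bilinear form on $H^1_\h(\Omega)\times L^2(\Omega;H^1_{\#,\h}(Y)/\R)$ appearing in \eqref{limitvarform} is coercive by the same ellipticity constant $\alpha$ applied to the combined gradient $\n_\h\phi+\n_{\h,y}\phi_1$, so Lax–Milgram gives a unique solution $(u,u_1)$; since the test pairs $(\phi,\psi_1)$ above are dense in the product space, the limit equation extends to all admissible $(\phi,\phi_1)$, and uniqueness identifies the weak limit, upgrading convergence from the subsequence to the whole sequence. The pair $(u,u_1)$ so obtained coincides with the one from Theorem \ref{gradientconvergence}, completing the proof.
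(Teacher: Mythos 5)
Your proposal follows the paper's proof essentially step for step: the same a priori bound and appeal to Theorem \ref{gradientconvergence} for the two stated convergences, the same oscillating test functions $\phi+\varepsilon\psi_1\left(x,\delta_{1/\varepsilon}(x)\right)$ together with the gradient relation splitting $\nabla_{\mathbb{H}}$ of the corrector into $\varepsilon\nabla_{\mathbb{H},x}\psi_1+\nabla_{\mathbb{H},y}\psi_1$, the same exact identity $T^{\varepsilon}(A^{\varepsilon})(x,y)=A(y)$, the same weak--strong pairing in the unfolded diffusion term, and the same finish via coercivity of the limit bilinear form to obtain uniqueness and hence convergence of the full sequence. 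There is, however, one concrete flaw in the test-function class.

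You take $\phi\in C_c^{\infty}(\Omega)$, whereas \eqref{mpde} is a Neumann problem whose solution and test space is $H^1_{\mathbb{H}}(\Omega)$; the paper takes $\phi\in C^{\infty}(\bar{\Omega})$. The closure of $C_c^{\infty}(\Omega)$ in the $H^1_{\mathbb{H}}$-norm is a proper closed subspace (the Heisenberg analogue of $H^1_0(\Omega)$), not all of $H^1_{\mathbb{H}}(\Omega)$, so your final density step --- ``the test pairs $(\phi,\psi_1)$ above are dense in the product space'' --- fails. With compactly supported $\phi$ you establish \eqref{limitvarform} only for boundary-vanishing test functions: this recovers the homogenized PDE in the interior but loses the homogenized Neumann condition $A_0\nabla_{\mathbb{H}}u\cdot n_{\mathbb{H}}=0$ on $\partial\Omega$, and consequently the limit system is no longer uniquely solvable among pairs $(u,u_1)$ with $u\in H^1_{\mathbb{H}}(\Omega)$ --- which breaks the uniqueness argument you invoke to upgrade from a subsequence to the whole sequence. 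The repair is immediate and is exactly what the paper does: take $\phi\in C^{\infty}(\bar{\Omega})$, smooth up to the boundary, which is dense in $H^1_{\mathbb{H}}(\Omega)$; with that single change your argument coincides with the paper's proof.
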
 
\begin{proof}
To prove the above theorem, the periodic unfolding operator on the Heisenberg group will be used as the main tool. Since, we have the uniform bound on $\|u_{\varepsilon}\|_{H^{1}_{\h}(\Omega)}$, from Theorem \ref{gradientconvergence},  upto a subsequence we have the existence of $(u,u_1)\in H^{1}_{\h}(\Omega)\times L^{2}(\Omega; H^{1}_{\#,\h}(Y)/\R) $ such that    
\begin{align*} 
&T^{\varepsilon}(u_\varepsilon)\to u ~\text{ strongly in }  L^2(\Omega\times Y)\\
&T^{\varepsilon}(\n_{\h}u_{\varepsilon})\rightharpoonup \n_{\h}u+\n_{\h,y} u_1  \text{ weakly in } L^{2}(\Omega\times Y).
\end{align*}
The proof will be completed if we are able to show that $(u,u_1)$ satisfies the variational form \eqref{limitvarform}. The oscillating test function will be used to prove that $(u,u_1)$ is the solution to the limit variational form. Let $\phi\in C^{\infty}(\bar{\Omega})$ and $\phi_1\in C_{c}^{\infty}(\Omega; C_{\#,\h}^{\infty}(Y)).$ Let $\phi_1^{\varepsilon}(x)=\varepsilon \phi_1\left(x,\delta_{\frac{1}{\varepsilon}}(x)\right)$. Then, we have the following convergence 
\begin{align*}
&T^{\varepsilon}(\phi)\to \phi \text{ strongly in } L^{2}(\Omega\times Y),\\
& T^{\varepsilon}(\phi_1^{\varepsilon}) \to 0 \text{ strongly in } L^{2}(\Omega \times Y).
\end{align*}
Now, by the homogeneous property of the horizontal vector field with respect to the dilation $\delta_{\lambda}$ and the periodicity of $\phi_1$ in $y$, we have 
 $\phi^{\varepsilon}_1(x)= \varepsilon\phi_1\left(x,\delta_{\frac{1}{\varepsilon}}(x)\right)=\varepsilon\phi_1\left(x,\left\{\delta_{\frac{1}{\varepsilon}}(x)\right\}_{\h}\right).$
Now apply $X_1$ on $\phi_1^{\varepsilon},$ we get
\begin{align*}
X_1(\phi^{\varepsilon}_1(x))=& ~\varepsilon\frac{\p\phi_1}{\p x_1}\left(x,\left\{\delta_{\frac{1}{\varepsilon}}(x)\right\}_{\h}\right)+\varepsilon2x_2 \frac{\p \phi_1}{\p x_3}\left(x,\left\{\delta_{\frac{1}{\varepsilon}}(x)\right\}_{\h}\right)\\
&+\frac{\p \phi_1}{\p y_1}\left(x,\left\{\delta_{\frac{1}{\varepsilon}}(x)\right\}_{\h}\right)+2\left\{\frac{x_2}{\varepsilon}\right\}_{e}\frac{\p \phi_1}{\p y_3}\left(x,\left\{\delta_{\frac{1}{\varepsilon}}(x)\right\}_{\h}\right) \\
=&~\varepsilon X_1\phi_1 \left(x,\left\{\delta_{\frac{1}{\varepsilon}}(x)\right\}_{\h}\right)+
Y_1\phi_1\left(x,\left\{\delta_{\frac{1}{\varepsilon}}(x)\right\}_{\h}\right).
\end{align*}
Similarly, we compute  $$X_2(\phi^{\varepsilon}(x))=\varepsilon X_2\left(x,\left\{\delta_{\frac{1}{\varepsilon}}(x)\right\}_{\h}\right)+Y_2 \phi_1\left(x,\left\{\delta_{\frac{1}{\varepsilon}}(x)\right\}_{\h}\right).$$
Combining the above  two equalities, we get the following relation  
$$\n_{\h}(\phi_{1}^{\varepsilon}(x))=\varepsilon\n_{\h}\phi_1\left(x,\delta_{\frac{1}{\varepsilon}}x\right)+\n_ {\h,y}\phi_1\left(x,\delta_{\frac{1}{\varepsilon}}x\right).$$

\noindent Now applying unfolding operator on both sides and passing to the limit, we get  $$T^{\varepsilon}(\n_\h\phi^{\varepsilon}_1)\to \n_{\h,y}\phi_1~\text{ strongly in } L^{2}(\Omega \times Y).$$
 Since $\phi+\phi_1^{\varepsilon}\in H^{1}_\h(\Omega),$ we can use this as a test function in the weak formulation \eqref{varform} to obtain,
 \begin{align*}
\int_{\Omega}A^{\varepsilon}\n_{\h}u_{\varepsilon}\cdot (\nabla_{\h}\phi +\n_{\h}\phi^{\varepsilon}_1 )\, dx+\int_{\Omega}u_{\varepsilon}\phi \, dx=\int_{\Omega}f\phi ~~~\text{for all}~\phi\in H^{1}_\h(\Omega).
 \end{align*}
Applying unfolding operator on both sides of the variational form, we get,

 \begin{align*}
&\int_{\Omega\times}T^{\varepsilon}(A^{\varepsilon}\n_{\h}u_{\varepsilon})(x,y)\cdot T^{\varepsilon}(\nabla_{\h}\phi +\n_{\h}\phi^{\varepsilon}_1 )(x,y)\, dxdy+\int_{\Omega\times Y}T^{\varepsilon}(u_{\varepsilon})(x,y)T^{\varepsilon}(\phi)(x,y) \, dx dy\\&\hspace{9cm}=\int_{\Omega\times Y}T^{\varepsilon}(f)(x,y)T^{\varepsilon}(\phi)(x,y)\, dxdy
 \end{align*}
 $\text{for all}~\phi\in H^{1}_\h(\Omega).$
As $A$ is $Y$-periodic implies that $T^{\varepsilon}(A^{\varepsilon})(x,y)=A(y).$ Using the convergence  of $T^{\varepsilon}(u_{\varepsilon}),~ T^{\varepsilon}(\n_{\h}u_\varepsilon)$ and $T^{\varepsilon}(\phi_{1}^\varepsilon)$, we can pass to the limit as  $\varepsilon \to 0$ in the above integral equality  to obtain

\begin{align}
\begin{split}
&\int_{\Omega\times Y}A(y)(\n_{\h}u(x)+\n_{\h,y}u_1(x,y))\cdot (\n_\h\phi(x)+\n_{\h,y}\phi_1(x,y))\, dxdy\\
&\hspace{3cm}+\int_{\Omega\times Y}u(x)\phi(x)\, dxdy=\int_{\Omega\times Y}f(x)\phi(x)\, dxdy
\end{split}
\end{align} 
for all $(\phi,\phi_1)\in C^{\infty}(\bar{\Omega})\times C_{c}^{\infty}(\Omega;C_{\#,\h}(Y)).$  By density, we have that the above  equality is true  for all $(\phi,\phi_1)\in H^{1}_\h (\Omega)\times L^{2}(\Omega; H^{1}_{\#,\h}(Y)/\R).$ In order to get the convergence of the full sequence it is sufficient to show that the  the limit variational form \eqref{limitvarform} admits unique solution. Uniqueness will be proved if we establish that the following bi-linear form $$B:H^{1}_\h (\Omega)\times L^{2}(\Omega; H^{1}_{\#,\h}(Y)/\R)\times H^{1}_\h (\Omega)\times L^{2}(\Omega; H^{1}_{\#,\h}(Y)/\R) \to \R,$$
given by  
\begin{align*}
&B((u,u_1),(\phi,\phi_1))\\ &= \int_{\Omega\times Y}A(y)(\n_{\h}u(x)+\n_{\h,y}u_1(x,y))\cdot (\n_\h\phi(x)+\n_{\h,y}\phi_1(x,y))\, dxdy+\int_{\Omega\times Y}u\phi \, dxdy
\end{align*} 
is elliptic. The ellipticity of $B$ follows from the eillipticity of $A$. To be more precise 
$$B((\phi,\phi_1),(\phi,\phi_1))>\frac{\alpha}{2}(\|\phi\|^2_{H^{1}(\Omega)}+\|\phi_{1}\|^2_{L^{2}(\Omega;H^{1}_{\#}(Y)/\R )}).$$ Hence this completes the proof the theorem. 
\end{proof}

We can write the variational form \eqref{limitvarform} which is in two-scale  in more explicit way using the cell problem. More precisely, we can get the one-scale form (homogenized equation). In order to write scale separated form, let us put $\phi=0$ in \eqref{limitvarform} to get,

\begin{align}\label{cellvarform}
\begin{split}
&\int_{\Omega\times Y}A(x,y)(\n_{\h}u(x)+\n_{\h,y}u_1(x,y))\cdot \n_{\h,y}\phi_1(x,y)\, dxdy=0
\end{split}
\end{align} 
 Let us introduce the following cell problem; for $i=1,2,$ find $Z_i\in H^1_{\#,\h}(Y)/\R$ such that
 \begin{align}
 \int_{Y}A(y)\n_{\h,y}Z_i(y)\cdot \n_{\h,y}\xi(y) dy=-\int_{Y}A(y)e_i\cdot\n_{\h,y}\xi(y) dy
 \end{align} for all $\xi\in H_{\#,\h}(Y)/\R.$ Here $e_i$ for $i=1,2$ denote the standard basis for $\R^2.$ Using $Z_i$, we can write $u_1(x,y)=\sum_{i=1}^2 Z_i(y) X_i u(x)$. Now put $\phi_1=0$ in the variational form \eqref{limitvarform} and substitute  $u_1(x,y)=\sum_{i=1}^2 Z_i(y) X_i u(x)$ to get 
 
 \begin{align*}
 \int_{\Omega\times Y}A(y)\left(\n_\h u(x)+\sum_{i=1}^2\n_{\h,y}Z_iX_i u\right)\cdot \n_\h \phi \,dxdy+|Y|\int_{\Omega}u \phi \, dx=|Y|\int_{\Omega}f\phi \, dx . 
 \end{align*}
 Above equality can be written as,
\begin{align}\label{scaleseparatedvarform}
 \int_{\Omega}\left(\int_{Y} A(y)(I_{2\times 2}+\begin{bmatrix}
 \n_{\h,y}Z_1~\n_{\h,y}Z_2
 \end{bmatrix} dy  \right)\n_{\h}u\cdot \n_{\h}\phi \, dx+|Y|\int_{\Omega} u\phi \,dx=|Y|\int_{\Omega}f\phi \,dx
\end{align} 
 Denote the homogenized  constant coefficient matrix $$A_0=\int_{Y} A(y)(I_{2\times 2}+\begin{bmatrix}
 \n_{\h,y}Z_1~\n_{\h,y}Z_2
 \end{bmatrix} dy=\int_{Y} A(y)\left(
 \begin{bmatrix}
 0 & 1\\ 1 & 0
 \end{bmatrix}+\begin{bmatrix}
 Y_1Z_1  & Y_1 Z_2\\
 Y_2 Z_1 &Y_2Z_2
 \end{bmatrix}\right) dy.$$
 Hence the variational form \eqref{scaleseparatedvarform}, reduces to    

\begin{align}\label{scalesepvarform} 
\int_{\Omega}A_0\n_{\h}u\cdot \n_{\h}\phi \, dx+|Y|\int_{\Omega} u\phi \,dx=|Y|\int_{\Omega}f\phi \,dx.
\end{align} The above variational form hold for all $\phi\in H_\h^{1}(\Omega).$ Hence the the varional form \eqref{scalesepvarform} corresponds to the following strong form
\begin{align}\label{strongform}
\begin{split}
-\text{div}_\h(A_0\n_\h u) +|Y|u=&|Y|f~~\text{in}~\Omega,\\
A_0\n_\h u\cdot n_\h=&0~~\text{on}~\p \Omega.
\end{split}
\end{align} 
 This is the homogenized system corresponding to \eqref{mpde}.

\subsection{Optimal control problem} To demonstrate the use of unfolding operator,
in this subsection, we will show how unfolding operator helps to characterize  the periodic interior optimal control. Let the admissible control set is $L^{2}_{\#,\h}(Y)$ and for $\theta \in L^{2}_{\#,\h}(Y),$ we denote $\theta^{\varepsilon}(x)=\theta \left(\delta_{\frac{1}{\varepsilon}}(x)\right)=\theta\left(\left\{\delta_{\frac{1}{\varepsilon}}(x)\right\}_\h\right).$ Let $A^{\varepsilon}(x)$, $\Omega,\Omega_\varepsilon$ are as defined earlier. We consider the following $L^{2}$- cost functional,
\begin{align}
J_{\varepsilon}(u_{\varepsilon},\theta)=\frac{1}{2}\int_{\Omega}A^{\varepsilon}\n_\h u_{\varepsilon}\cdot \n_\h u_{\varepsilon}+\frac{\rho}{2}\int_{\Omega_\varepsilon}|\theta^{\varepsilon}|^{2},
\end{align}
where $\beta>0$ is a regularization parameter and $u_{\varepsilon}$ satisfies the following constrained PDE,
\begin{align}\label{mpdeopt}
\begin{split}
&-\text{div}_{\h}(A^{\varepsilon}\n_\h u_{\varepsilon})+u_{\varepsilon}=f+\chi_{\Omega_\varepsilon}\theta^{\varepsilon}~~\text{in}~ ~~\Omega\\
& \hspace{6mm}   A^{\varepsilon}(x)\n_\h u_{\varepsilon}\cdot n_{\h}=0~~~\text{on}~~ \p\Omega,
\end{split}
\end{align}  
with $f\in L^2(\Omega).$  The optimal control problem is to find $(\bar{ u}_{\varepsilon},\bar{\theta}_{\varepsilon})\in H_\h^{1}(\Omega)\times L^{2}_{\#,\h}(Y)$ such that 
\begin{align}\label{opthei}
J_{\varepsilon}(\bar{ u}_{\varepsilon},\bar{\theta}_{\varepsilon})=\inf\{J_{\varepsilon}(u_{\varepsilon},\theta):~(u_{\varepsilon},\theta) ~\text{satisfies } \eqref{mpdeopt}\}
\end{align}
As $A^{\varepsilon}$ is uniformly elliptic and $\rho>0$ imply that $J_{\varepsilon}$ is strictly convex. Hence, the classical method of calculus of variation ensures the existence and uniqueness of $(\bar{u}_{\varepsilon},\bar{\theta}_{\varepsilon}).$  The following theorem theorem gives the characterization of the optimal control in the $\varepsilon$ stage. 
\begin{thm}
Let $(\bar{u}_{\varepsilon},\bar{\theta}_\varepsilon)\in H_\h^{1}(\Omega)\times L^2_{\#,\h}(Y)$ be the optimal solution to the optimal control problem \eqref{opthei}. Then, the optimal control $\bar{\theta}_{\varepsilon}$ can be written as
\begin{align}
\bar{\theta}_{\varepsilon}=\frac{1}{|\Omega|}\int_{\Omega}T^{\varepsilon}(\bar{v}_{\varepsilon})(x,y)\, dx,
\end{align}
where $\bar{v}_{\varepsilon}$ satisfies the following adjoint PDE,
\begin{align}\label{adeq}
\begin{cases}
&\ds -\emph{div}_\h(A^{\varepsilon}\n_\h \bar{v}_{\varepsilon})+ \bar{v}_{\varepsilon}=-\emph{div}_\h\left(A^{\varepsilon}\n_\h \bar{u}_{\varepsilon} \right)~~\text{in} ~~\Omega,\\
&\ds A^{\varepsilon}\n_\h \bar{v}_{\varepsilon}\cdot n_\h=0~~\text{on}~~\partial \Omega.
 \end{cases}
\end{align}
\end{thm}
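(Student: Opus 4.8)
The plan is to derive the first-order optimality condition for the strictly convex functional $J_{\varepsilon}$ and then use the unfolding operator to convert that condition, which lives on the physical domain $\Omega_\varepsilon$, into a pointwise identity on the reference cell $Y$. Since the admissible set $L^{2}_{\#,\h}(Y)$ is a linear space with no pointwise constraints on the control, the unique minimiser $(\bar{u}_{\varepsilon},\bar{\theta}_{\varepsilon})$ is characterised by the vanishing of the Gateaux derivative of the reduced cost $\theta\mapsto J_{\varepsilon}(u_{\varepsilon}(\theta),\theta)$ in every direction $\eta\in L^{2}_{\#,\h}(Y)$. Because the map $\theta\mapsto u_\varepsilon(\theta)$ defined by \eqref{mpdeopt} is affine, the sensitivity $w_{\varepsilon}$ solves the linear problem $-\text{div}_\h(A^{\varepsilon}\n_\h w_{\varepsilon})+w_{\varepsilon}=\chi_{\Omega_\varepsilon}\eta^{\varepsilon}$ with the homogeneous conormal condition, so that
\[
\frac{d}{ds}J_{\varepsilon}(u_{\varepsilon}(\bar{\theta}_{\varepsilon}+s\eta),\bar{\theta}_{\varepsilon}+s\eta)\Big|_{s=0}=\int_{\Omega}A^{\varepsilon}\n_\h\bar{u}_{\varepsilon}\cdot\n_\h w_{\varepsilon}\,dx+\rho\int_{\Omega_\varepsilon}\bar{\theta}_{\varepsilon}^{\varepsilon}\eta^{\varepsilon}\,dx=0.
\]

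Next I would eliminate the sensitivity $w_{\varepsilon}$ using the adjoint state $\bar{v}_{\varepsilon}$ defined by \eqref{adeq}. Testing the weak form of \eqref{adeq} with $w_{\varepsilon}$, and testing the weak form of the $w_{\varepsilon}$-equation with $\bar{v}_{\varepsilon}$, the two principal bilinear expressions coincide because $A^{\varepsilon}$ is symmetric (it is a section of symmetric endomorphisms of the horizontal fibres); comparing the two identities yields $\int_{\Omega}A^{\varepsilon}\n_\h\bar{u}_{\varepsilon}\cdot\n_\h w_{\varepsilon}\,dx=\int_{\Omega_\varepsilon}\bar{v}_{\varepsilon}\eta^{\varepsilon}\,dx$. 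Substituting this into the optimality condition removes $w_{\varepsilon}$ entirely and leaves
\[
\int_{\Omega_\varepsilon}\bigl(\bar{v}_{\varepsilon}+\rho\,\bar{\theta}_{\varepsilon}^{\varepsilon}\bigr)\eta^{\varepsilon}\,dx=0\qquad\text{for all }\eta\in L^{2}_{\#,\h}(Y).
\]

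The final step is to unfold this identity. By the $L^{1}$ integral identity $\int_{\Omega_\varepsilon}\phi=\frac{1}{|Y|}\int_{\Omega\times Y}T^{\varepsilon}(\phi)$ together with the multiplicativity $T^{\varepsilon}(\phi_1\phi_2)=T^{\varepsilon}(\phi_1)T^{\varepsilon}(\phi_2)$, each term turns into an integral over $\Omega\times Y$. Since unfolding leaves a Heisenberg-periodic profile unchanged, one has $T^{\varepsilon}(\eta^{\varepsilon})(x,y)=\eta(y)$ and $T^{\varepsilon}(\bar{\theta}_{\varepsilon}^{\varepsilon})(x,y)=\bar{\theta}_{\varepsilon}(y)$ on $\Omega_\varepsilon\times Y$; this is where the explicit structure of $\{\cdot\}_\h$ and the automorphism property $\delta_{\frac{1}{\varepsilon}}(\delta_{\varepsilon}p\cdot\delta_{\varepsilon}q)=p\cdot q$ enter, giving $\{2[\delta_{\frac{1}{\varepsilon}}x]_\h\cdot y\}_\h=y$ for $y\in Y$. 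The condition then reads
\[
\frac{1}{|Y|}\int_{Y}\Bigl(\int_{\Omega}T^{\varepsilon}(\bar{v}_{\varepsilon})(x,y)\,dx+\rho\,|\Omega_\varepsilon|\,\bar{\theta}_{\varepsilon}(y)\Bigr)\eta(y)\,dy=0,
\]
and as $\eta$ ranges over all of $L^{2}_{\#,\h}(Y)$ the inner bracket vanishes for a.e.\ $y$, which solves for $\bar{\theta}_{\varepsilon}(y)$ as the announced cell-average of the unfolded adjoint state; the normalising constant is fixed by this relation (with $|\Omega_\varepsilon|\to|\Omega|$ as $\varepsilon\to0$).

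I expect the main obstacle to be the rigorous identification $T^{\varepsilon}(\eta^{\varepsilon})(x,y)=\eta(y)$, namely showing that the unfolding operator acts as the identity on the periodic profile of an oscillating control. This rests on the explicit formula for $\{\cdot\}_\h$ combined with the fact that the dilations $\delta_{\lambda}$ are group automorphisms, and it is the only place where the non-commutative Heisenberg structure genuinely intervenes. A secondary bookkeeping point is that restricting to $\Omega_\varepsilon$ produces the volume factor $|\Omega_\varepsilon|$ and the regularisation constant $\rho$, both of which must be tracked consistently when reading off the averaging formula for $\bar{\theta}_{\varepsilon}$.
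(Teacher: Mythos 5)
Your proposal follows essentially the same route as the paper's proof: the Gateaux derivative of the reduced cost via the sensitivity equation \eqref{wtheta}, elimination of the sensitivity by testing the adjoint problem \eqref{adeq} against it (using the symmetry of $A^{\varepsilon}$, which you rightly make explicit and the paper only assumes implicitly), and unfolding the resulting identity with $T^{\varepsilon}(\eta^{\varepsilon})(x,y)=\eta(y)$ to read off $\bar{\theta}_{\varepsilon}$ as a cell average of $T^{\varepsilon}(\bar{v}_{\varepsilon})$. The only divergences are bookkeeping, where you are in fact the more careful party: you track the factor $|\Omega_{\varepsilon}|$ coming from the vanishing of $T^{\varepsilon}$ on $\Lambda_{\varepsilon}\times Y$ (the paper writes $|\Omega|$ directly, consistent with its stated convention of ignoring $\Lambda_{\varepsilon}$-terms), and your final formula carries the constant $-\frac{1}{\rho}$, which agrees with the paper's own concluding line but not with the constant displayed in the theorem statement.
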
 
\begin{proof}
Given $\theta \in L^2_{\#,\h}(Y),$ denote $F_{\varepsilon}(\theta)=J_{\varepsilon}(u^{\varepsilon}(\theta),\theta)$
where $u_{\varepsilon}(\theta)$ is the solution to \eqref{mpdeopt}. Evaluating the limit of $$\frac{1}{\lambda}(F_{\varepsilon}(\bar{\theta}_{\varepsilon}+\lambda \theta)-F_{\varepsilon}(\bar{\theta}_{\varepsilon}))$$ as $\lambda \to 0$ and denoting the limit by $F'(\bar{\theta}_{\varepsilon})\theta$, we get 
\begin{align*}
F'_{\varepsilon}(\bar{\theta}_\varepsilon) \theta=\int_{\Omega}A^{\varepsilon} \n_\h \bar{u}_{\varepsilon}\cdot \n_\h w_{\theta} \, dx+\rho \int_{\Omega_\varepsilon} \bar{\theta}^{\varepsilon}_{\varepsilon}\theta^{\varepsilon} \, dx.
\end{align*}
where $w_{\theta}$ is the solution to the following PDE,

\begin{align}\label{wtheta}
\begin{cases}
&\ds -\text{div}_\h\left(A^{\varepsilon}\n_\h w_{\theta} \right)+ w_{\theta}=\chi_{\Omega_{\varepsilon}}\theta^{\varepsilon}~~\text{in} ~~\Omega,\\
&\ds  A^{\varepsilon}\n_\h w_{\theta}\cdot n_\h=0~~\text{on}~~\partial \Omega.
 \end{cases}
\end{align}
As $(\bar{u}_{\varepsilon}, \bar{\theta}_{\varepsilon})$ is the optimal solution, we have
$$F'_{\varepsilon}(\bar{\theta}_{\varepsilon})\theta=0,~~\text{for~all} ~~\theta \in L^{2}_{\#,\h}(Y).$$ 
Hence, we get,
\begin{align}\label{beptheta}
\int_{\Omega} A^{\varepsilon}\n_\h \bar{u}_{\varepsilon} \cdot \n_\h w_{\theta}\, dx=-\rho \int_{\Omega_\varepsilon} \bar{\theta}^{\varepsilon}_{\varepsilon} \theta^{\varepsilon}\, dx.
\end{align}
Let $\bar{v}_{\varepsilon}$ satisfies \eqref{adeq}. Using $w_{\theta}$ as a test function in \eqref{adeq} and $\bar{v}_{\varepsilon}$ in \eqref{wtheta}, we obtain

\begin{align}\label{bepv}
\int_{\Omega} A^{\varepsilon}\n_\h \bar{u}_{\varepsilon}\cdot \n_\h w_{\theta}\, dx=\int_{\Omega_\varepsilon} \bar{v}_{\varepsilon}  \theta^{\varepsilon}\, dx.
\end{align} 
Hence from \eqref{beptheta} and \eqref{bepv}, we have 
\begin{align}\label{22}
\int_{\Omega_\varepsilon}\bar{\theta}^{\varepsilon}_{\varepsilon}\theta^{\varepsilon}=-\frac{1}{\rho} \int_{\Omega_\varepsilon}\bar{v}_{\varepsilon}\theta^{\varepsilon}.
\end{align} Now from the definition of unfolding, we have $T^{\varepsilon}(\theta^{\varepsilon})(x,y)=\theta(y)$ and $T^{\varepsilon}(\bar{\theta}^{\varepsilon}_\varepsilon)(x,y)=\bar{\theta}_{\varepsilon}(y)$. Hence, by applying the unfolding operator on both sides of \eqref{22}, we get, $$\int_{\Omega\times Y}\bar{\theta}_\varepsilon(y) \theta(y)=-\frac{1}{\rho}\int_{\Omega\times Y}T^{\varepsilon}(v_{\varepsilon})(x,y) \theta(y).$$ The above equality holds for all $\theta\in L^{2}_{\#,\h}(Y),$ which implies that $$\bar{\theta}_{\varepsilon}(y)=-\frac{1}{\rho|\Omega|}\int_{\Omega}T^{\varepsilon}(\bar{v}_{\varepsilon})(x,y)\, dx$$     This completes the proof.
\end{proof}

\noindent We have only characterized the optimal control problem using the unfolding operator. Indeed, we can study the homogenization of the problem and obtain the limit problem along the similar lines as in the beginning of this section. Hence omit further details.
\begin{rem}
The unfolding operator, we have  defined is not restricted to $\h^1$, it can be extended in the same way to any $\h^{n}.$ Similarly, the problem under consideration can also be studied in any $\h^n$  for $n \in \mathbb{N}$, using the unfolding operator.     
\end{rem}

\end{document}